\numberwithin{equation}{section}
\DeclareMathOperator{\opdiv}{div}
\newcommand{\bx}{\mathbf{x}}
\newcommand{\bw}{\mathbf{w}}
\newcommand{\bu}{\mathbf{u}}
\newcommand{\bn}{\mathbf{n}}
\newcommand{\dx}{\, \mathrm{d}}
\def\xR{\mathbb{R}}
\newtheorem{theo}{Theorem}[section]
\newtheorem{defi}[theo]{Definition}
\newtheorem{prop}[theo]{Proposition}
\newtheorem{proposition}[theo]{Proposition}
\newtheorem{lemma}[theo]{Lemma}
\newtheorem{rema}[theo]{Remark}
\def\mathcolor#1#{\@mathcolor{#1}}
\def\@mathcolor#1#2#3{%
	\protect\leavevmode
	\begingroup
	\color#1{#2}#3%
	\endgroup
}
\title{Existence of traveling wave for a coupled incompressible Darcy’s free boundary model with undercooling effect and surface tension}
\author[1]{Claire Alamichel}
\author[2]{Nicolas Meunier}
\affil[1]{Univ. Rennes, Inria, IRMAR - UMR 6625, F-35000 Rennes, France. (claire.alamichel@inria.fr)}
\affil[2]{LaMME, UMR 8071 CNRS, Universit\'e \'Evry Val d'Essonne, France. (nicolas.meunier@univ-evry.fr)}
\begin{document}
	
	\maketitle
	
	\begin{center}
    
    
    {\large \textbf{Abstract}}
    \end{center}
    
    In this paper, we present a cell motility model that takes into account the cell membrane effect. The model introduced is an incompressible Darcy free boundary problem. This model involves a nonlinear term in the boundary condition to model the action of the membrane. This term can be seen as a undercooling effect of the membrane on the cell. It also implies a destabilizing nonlinear term in the boundary condition, depending on polarity markers and modeling the active character of the cytoskeleton. First, we study the linear stability of the steady state and prove that above a threshold, the disk is linearly unstable. This analysis highlights the stabilizing effect of undercooling. Then, using a bifurcation argument, we prove the existence of traveling waves that describe a persistent motion in cell migration and justify the relevance of the model.
    
    {
    	\tableofcontents
    }
    
    \medskip

	\section{Introduction}
	\subsection{Biological context and modeling}
	
	Cell motility is a biological process involved in numerous biological phenomena such as immune response, wound healing, embryonic development and cancer propagation. Although different types of cell migration exist, they are all governed by the same mechanisms and rely on a cell's ability to polarize. This means that the cell is able of breaking its internal symmetry and possessing a defined front and rear \citep{danuser_mathematical_2013,reig_cell_2014}. Once the cell is polarized, cell migration can occur. This relies on cytoskeletal activity via the polymerization of actin filaments at the front of the cell, and the contraction of acto-myosin fibers that push the cytoskeleton from rear to front \citep{abercrombie_croonian_1980}. 
	
	The cell is enclosed by the plasma membrane which protects and separates the contents of the cell from the external environment. It also enables communication with the external environment. The plasma membrane contributes to the mechanical properties of cell movement via membrane tension. Membrane tension induces an opposite force to membrane extension. It results from the inextensible nature of the membrane, which creates in-plane tension, and from the energy derived from adhesion between the cytoskeleton and the membrane. As the cell is deformed, membrane tension changes rapidly, causing the cell surface tension to vary. Membrane tension also regulates polarisation between the front and rear of the cell \citep{thiam_cell_nodate}.
	
	In this paper, we propose and study a model describing this phenomenon, taking into account the action of the membrane on cell motility. The proposed model is a free-boundary model given by:
    \begin{subequations}\label{eq:model}
		\begin{align}[left=\empheqlbrace]
			& \bu + \nabla P = 0 && \text{in } \Omega\left(t\right), \label{eq:model_darcy} \\
			& \opdiv\left( \bu\right) = 0 && \text{in } \Omega\left(t\right), \label{eq:model_incompressibility} \\
			& V_n = \bu \cdot \mathbf{n} && \text{on } \partial \Omega \left(t\right), \label{eq:model_normal_velocity}\\
			& P = \gamma \kappa + \chi_c f_{\mathrm{act}}\left(c\right) + \chi_u f_{\mathrm{und}} \left(V_n\right) && \text{on } \partial \Omega \left(t\right), \label{eq:model_pressure_boundary} \\
			& \partial_t c = \opdiv \left( \nabla c - \left(1-a\right) \bu c\right) && \text{in }  \Omega\left(t\right),  \label{eq:model_markers_dynamic}\\
			& \left( \nabla c + a \bu c\right)\cdot \bn = 0 && \text{on }  \partial \Omega\left(t\right) , \label{eq:model_markers_boundary}\\
			& c(0,\bx) = c^{\mathrm{in}}(\bx) && \text{in } \Omega^{\mathrm{in}}, \label{eq:model_initial_condition} \\
			& \Omega\left(0\right) = \Omega^{\mathrm{in}}.&& \label{eq:model_initial_domain}
		\end{align}
	\end{subequations}
	
	This free-boundary model is in continuity with the model proposed by \citet{lavi_motility_2020}. The cell is modeled by a droplet of incompressible fluid confined between two parallel plates and containing polarity markers of concentration $c$. $\bu$ denotes the gap-averaged planar flow and $P$ the pressure of the fluid. Thin-film lubrication and Hele-Shaw approximations yield velocity $\bu$ and pressure $P$ satisfying Darcy's law \cref{eq:model_darcy}. The kinematic condition \eqref{eq:model_normal_velocity} indicates that the normal velocity of the sharp interface is given by the normal velocity of the fluid.
	
	Initially, the fluid is contained in $\Omega^{\mathrm{in}} \subset \mathbb{R}^2$ a simply connected bounded open. $c^{\mathrm{in}}$ is a given smooth non-negative function defined on $\Omega^{\mathrm{in}}$, which represents the concentration of a solute at time $t = 0$. In \eqref{eq:model}, we seek a family of open sets $\Omega(t)$ of $\xR ^2$ with boundary $\partial \Omega(t)$, whose curvature (positive for a circle) is $\kappa$, and a concentration function $c\left(t,\bx\right)$ defined on $\Omega\left(t\right)$. 
	
	The cell membrane is modeled by the boundary $\partial \Omega\left(t\right)$. The action of the membrane on the cell is modelled by the force $-\chi_u f_{\mathrm{und}}\left(V_n\right)$ where $\chi_u>0$ and $V_n$ is the normal velocity of the boundary. We assume that $f_{\mathrm{und}}$ satisfies:
	\begin{subequations}\label{eq:conditions_f_und}
		\begin{align}
			& f_{\mathrm{und}} \in C^1\left(\mathbb{R}\right), \\
			& f_{\mathrm{und}} \text{ is an odd and increasing function}, \\
			& f'_{\mathrm{und}}\left(0\right) > 0. 
		\end{align}
	\end{subequations}
	The case $\chi_u = 0$ means neglecting the action of the cell membrane and considering that it behaves in the same way as the rest of the cell. In this case, we find the model of \citet{lavi_motility_2020} which is studied in \citet{alazard_traveling_2022}.
	
	In the same way as in \citet{lavi_motility_2020}, we assume that polarity markers are rear markers, meaning that the rear part of the cell is defined by the area with the highest concentration of markers. These markers induce an active force at the edge of the cell given by $-\chi_c f_{\mathrm{act}}\left(c\right)$ with $\chi_c>0$ and $f_{\mathrm{act}}$ satisfying:
    \begin{subequations}\label{eq:conditions_f_act}
		\begin{align}
			& f_{\mathrm{act}} \in C^1\left(\mathbb{R^+}\right), \\
			& f_{\mathrm{act}} \text{ is an increasing function}, \\
			& f_{\mathrm{act}}\left(0\right) = 0, \\
			& \lim\limits_{x\rightarrow + \infty} f_{\mathrm{act}}\left(x\right) = L_c < + \infty.
		\end{align}
	\end{subequations}
	This force is used to model the action of the cytoskeleton on the cell. 
	
	The boundary condition \eqref{eq:model_pressure_boundary} is obtained as the balance of the forces acting on the membrane and can be seen as a perturbation of the Young-Laplace equation. $\gamma$ designates the effective surface tension and is given, $\kappa$ designates the curvature.
	
    To close the system, the internal solute transport problem is formulated in \eqref{eq:model_markers_dynamic} -- \eqref{eq:model_markers_boundary}. In the bulk $\Omega(t)$,  fast adsorption on the top and bottom plates (or onto an adhered cortex) is assumed. With rapid on and off rates, the quasi-2D transport dynamics are given by \eqref{eq:model_markers_dynamic} -- \eqref{eq:model_markers_boundary} where $a \in \left[0,1\right]$ is the steady fraction of adsorbed molecules not convected by the average flow and the effective diffusion coefficient is assumed to be $1$.

    In \eqref{eq:model_markers_boundary}, a zero solute flux on the moving boundary $\partial \Omega(t)$ is imposed. Simply put, the solute is effectively convected at a slower velocity than that of the fluid. Hence, its concentration decreases (increases) towards an advancing (retracting) front.

    The solute can be any cytoplasmic protein controlling the active force-generation or adhesion machinery. In this work, it is assumed that the concentration $c$ either induces an inwards pulling force or inhibits an outwards pushing force.
    
    Formally, the boundary condition \eqref{eq:model_markers_boundary} induces the conservation of molecular content over time, that is for all time $t$ it holds that:
    \begin{equation}\label{eq:M}
        M :=\int_{\Omega(t)} c(t,x,y)\dx x \dx y = \int_{\Omega_0} c_0(x,y)\dx x \dx y.
    \end{equation}
    
    From the incompressibility constraint \eqref{eq:model_incompressibility} it follows that the cell domain area $A_\Omega$ is constant over time. Let $\bu_{\mathrm{cm}}$ be the velocity of the center of mass. From the total force balance on the cell, we have that for all $t\geq 0$:
	\begin{equation}\label{eq:undercooling_velocity_cm}
		\bu_{\mathrm{cm}} \left(t\right) = - \dfrac{1}{A_{\Omega}} \int_{\partial \Omega\left(t\right)} \left(\chi_c f_{\mathrm{act}}\left(c\right) + \chi_u f_{\mathrm{und}} \left(V_n\right)\right) \bn \dx \sigma.
	\end{equation}

	Let us briefly comment the existing  literature concerning \eqref{eq:model}. Moving interface problems have raised many interesting and challenging mathematical issues. A well known example is the Stefan problem which describes the dynamics of the boundary between ice and water. In the biophysical community, we find a large number of free boundary models to describe tumor and tissue growth, cell motility and other phenomena. We refer to \citet{Aranson,Levine} for a review.
	
	Most of them are formulated through a fluid approach with surface tension. Some tumor growth models (e.g. \cite{Friedman_2004, Friedman_Hu_2006, Friedman_Reitich_2001}) resemble our model \eqref{eq:model}. However, there is an important difference: tumor growth naturally involves expanding domain while we consider here incompressible solutions. In the context of the motility of eukaryotic cells on substrates, various free boundary problems have been derived and studied, see \citet{Berlyand_2016, Berlyand_2017, Berlyand_2018, Berlyand_2019,Berlyand_2021,Mizuhara_2016,Mizuhara}. The existence of traveling wave solutions for these models is proved in \citet{Berlyand_2016,Berlyand_2018,Berlyand_2019, Berlyand_2021}.
	
	In the 1D setting, Keller-Segel system with free boundaries as a model for contraction driven motility were introduced and studied in \cite{Recho_2013-2,Recho_2013,Recho_2015,Recho_2018}.  
	
	In the context of sharp interface limit some models for cell motility were studied in \cite{CMM_2020,CMM_2022}. These models differ from our approach in that no polarity markers are involved. However, they are similar to our model in that they take into account an overcooling force at the edge of the cell (the sign is opposite to that of an undercooling force). We note that traveling wave solutions for a moving boundary problem of Hele-Shaw type have been studied in the case with kinetic undercooling regularization in \citet{Gunther}. This model, which models the movement of a bubble in an exterior Hele-Shaw flow, differs from our model in its point of view and in the absence of polarity markers and coupling with them.
	
	\subsection{On traveling waves}
	
	A remarkable feature of cell motility is the appearance of sustained movement in a given direction without external cue, see \cite{Barnhart, Aranson, Keren}.
    This phenomenon, known as spontaneous polarization, see \cite{calvez_analysis_2012,etchegaray_analysis_2017,PLOS} e.g., is mathematically described by the existence of traveling wave solutions and is the main subject of this article.

	A traveling wave is characterized by a domain with a fixed shape $\tilde{\Omega}$ moving at a constant velocity $V \in \mathbb{R}$ in a fixed direction $\bw \in \mathbb{R}^2$. We then have:
	\begin{equation*}
		\Omega\left(t\right) = \tilde{\Omega} + t V \bw.
	\end{equation*}
	Without loss of generality, assume that $V\geq0$ and $\bw=\left(1,0\right)$. In this case, the normal velocity at the boundary of the cell satisfies $V_n = V n_x$.
	
	Using the traveling wave ansatz:
	\begin{equation*}
		c = c\left(x-Vt, y\right), \quad \quad P = P\left(x-Vt, y\right), \quad \quad \Omega\left(t\right) = \tilde{\Omega} + \left(Vt,0\right),
	\end{equation*}
	we obtain that a solution of model \eqref{eq:model}, which is a traveling wave for this model, is defined as follows in \cref{def:tw}.
	\begin{defi} \label{def:tw}
		A traveling wave of the model \eqref{eq:model} is given by a domain $\tilde{\Omega} \subset \mathbb{R}^2$, $V\geq0$ a real velocity and two functions $P$ and $c$ satisfying:
		\begin{subequations}\label{eq:tw_def}
			\begin{align}[left=\empheqlbrace]
				&- \Delta P = 0 && \text{in } \tilde{\Omega}, \label{eq:tw_poisson}\\
				& P = \gamma \kappa + \chi_c f_{\mathrm{act}}\left(c\right) + \chi_u f_{\mathrm{und}}\left( V n_x \right) && \text{on } \partial\tilde{\Omega},  \label{eq:tw_boundary_pressure} \\
				& -\nabla P \cdot \bn  = V n_x && \text{on } \partial \tilde{\Omega},  \label{eq:tw_boundary_normal_velocity}\\
				& \opdiv \left( \left(V,0\right)c +  \nabla c + \left(1-a\right) \nabla P c\right) = 0 && \text{in }  \tilde{\Omega},  \label{eq:tw_concentration}  \\
				& \left( \nabla c + a \left(V,0\right) c \right)\cdot \bn = 0 && \text{on }  \partial \tilde{\Omega}. \label{eq:tw_concentration_boundary}
			\end{align}
		\end{subequations}
	\end{defi}
	
	\begin{proposition}\label{prop:characterization_tw}
		Let $V\geq0$ be given. If $c$ and $P$ are solutions of \cref{eq:tw_def} associated with $V$ then $c$ and $P$ are of the form:
		\begin{align*}
			& P\left(x,y\right) = p_1 - Vx && \text{with } p_1 \in \mathbb{R}, \\
			& c\left(x,y\right) = \dfrac{M}{\int_{\tilde{\Omega}} e^{-aVx'} \dx x' \dx y'} e^{-aVx}, &&\\
		\end{align*}
		where $\left(x,y\right) \in \tilde{\Omega}$ and $M \geq 0$ is the total quantity of markers.
		
		Furthermore, $\partial \tilde{\Omega}$ is characterized by the curvature equation, given for all $\left(x,y\right) \in \partial \tilde{\Omega}$:
		\begin{equation}\label{eq:tw_curvature_equation}
			\gamma \kappa\left(x,y\right) = p_1 - Vx - \chi_c f_{\mathrm{act}}\left( \frac{M e^{-aVx}}{\int_{\tilde{\Omega}} e^{-aVx'} \dx x' \dx y'} \right) - \chi_u f_{\mathrm{und}}\left(V n_x\right).
		\end{equation}
	\end{proposition}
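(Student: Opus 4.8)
The plan is to determine $P$ first, then deduce $c$, and finally read off \eqref{eq:tw_curvature_equation} from the boundary relation \eqref{eq:tw_boundary_pressure}. The whole argument rests on two elementary uniqueness facts: uniqueness (up to a constant) for the homogeneous Neumann Laplacian, and a one-line energy estimate for a degenerate-elliptic equation in divergence form.

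First I would pin down the pressure. Since $-\Delta P=0$ in $\tilde{\Omega}$ by \eqref{eq:tw_poisson}, set $Q:=P+Vx$, which is again harmonic. On $\partial\tilde{\Omega}$, using \eqref{eq:tw_boundary_normal_velocity} and $\nabla(Vx)\cdot\bn = Vn_x$, one gets $\nabla Q\cdot\bn = \nabla P\cdot\bn + Vn_x = -Vn_x+Vn_x = 0$. Thus $Q$ solves the homogeneous Neumann problem on the connected domain $\tilde{\Omega}$, hence is constant, say $Q\equiv p_1$, which gives $P(x,y)=p_1-Vx$ and in particular $\nabla P=(-V,0)$.

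Next I would deduce the form of $c$. Substituting $\nabla P=(-V,0)$ into \eqref{eq:tw_concentration}, the convective terms collapse: $(V,0)c+(1-a)\nabla P\,c=a(V,0)c$, so \eqref{eq:tw_concentration}--\eqref{eq:tw_concentration_boundary} reduce to $\opdiv\big(\nabla c+a(V,0)c\big)=0$ in $\tilde{\Omega}$ together with $\big(\nabla c+a(V,0)c\big)\cdot\bn=0$ on $\partial\tilde{\Omega}$. I then use the integrating factor $c=e^{-aVx}g$, under which $\nabla c+a(V,0)c=e^{-aVx}\nabla g$, so that the system becomes $\opdiv\big(e^{-aVx}\nabla g\big)=0$ in $\tilde{\Omega}$ with $e^{-aVx}\nabla g\cdot\bn=0$ on $\partial\tilde{\Omega}$. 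Multiplying by $g$ and integrating by parts annihilates the boundary term and leaves
\[
\int_{\tilde{\Omega}} e^{-aVx}\,|\nabla g|^2\dx x\dx y = 0 ,
\]
and since $e^{-aVx}>0$ this forces $\nabla g\equiv 0$, so $g$ is a constant $\lambda$ and $c(x,y)=\lambda e^{-aVx}$. The constant $\lambda$ is then fixed by the conservation of mass \eqref{eq:M}, namely $M=\int_{\tilde{\Omega}}c=\lambda\int_{\tilde{\Omega}}e^{-aVx'}\dx x'\dx y'$, which yields the claimed expression for $c$. Finally, inserting $P=p_1-Vx$ and this formula for $c$ into \eqref{eq:tw_boundary_pressure} gives \eqref{eq:tw_curvature_equation} directly.

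I expect the only real subtleties to be bookkeeping ones: the uniqueness in the first step needs $\tilde{\Omega}$ connected, the energy identity in the second step needs enough regularity of $\partial\tilde{\Omega}$ and of $(c,P)$ to legitimate the integration by parts, and the strict positivity of the weight $e^{-aVx}$ is exactly what makes that energy argument conclusive. The degenerate case $V=0$ is included: then $c\equiv M/|\tilde{\Omega}|$ and \eqref{eq:tw_curvature_equation} reduces to $\gamma\kappa=p_1-\chi_c f_{\mathrm{act}}(M/|\tilde{\Omega}|)$, consistent with a disk. No step here looks like a genuine obstacle; the content of the proposition is that the traveling-wave ansatz plus the two uniqueness facts pin down $P$ and $c$ completely, reducing the full free-boundary system to the single scalar curvature equation \eqref{eq:tw_curvature_equation}.
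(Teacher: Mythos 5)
Your argument is correct and follows essentially the same route as the paper: the paper likewise reduces the pressure to a harmonic function with zero Neumann data (via $\phi=-P-Vx$ and the same energy identity underlying your uniqueness claim), uses the same integrating factor $c=c_1 e^{-aVx}$ with a weighted energy estimate to conclude $\nabla c_1\equiv 0$, and then reads off the curvature equation from \eqref{eq:tw_boundary_pressure}. No substantive differences.
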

	
	\begin{rema}
		We say that a traveling wave $\left(\tilde{\Omega},V\right)$ of \eqref{eq:model} is non-trivial  if the velocity is not zero. When it is non-trivial, we call it a traveling wave, while when it is trivial, we call it a resting state.
	\end{rema}
	
	By taking $V=0$ in \cref{prop:characterization_tw}, it follows that the model admits an unique resting state, which is radially symmetrical.
	\begin{proposition}\label{prop:characterization_es}
		The model \eqref{eq:model} admits an unique resting state given by:
		\begin{subequations}\label{eq:model_resting_state}
			\begin{align}[left=\empheqlbrace]
				& c^0\left(\bx\right) = \dfrac{M}{\left| \Omega \right|} && \bx \in \Omega^0, \label{eq:model_resting_state_concentration}\\
				& P^0\left(\bx\right) = \dfrac{\gamma}{R_0} + \chi_c f_{\mathrm{act}}\left(c^0\left(\bx\right)\right)&& \bx \in \Omega^0, \label{eq:model_resting_state_pressure}\\
				& \bu^0\left(\bx\right) = 0 && \bx \in \Omega^0, \label{eq:model_resting_state_velocity}\\
				& \Omega^0 = B\left(0,R_0\right),
			\end{align}
		\end{subequations}
		where $R_0 = \sqrt{\dfrac{A_{\Omega}}{\pi}}$. This resting state is radially symmetric. 
	\end{proposition}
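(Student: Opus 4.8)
The statement is a straightforward specialisation of \cref{prop:characterization_tw} to $V=0$, so the plan has two parts: first produce the candidate and check it is a resting state (existence), then show any resting state must coincide with it (uniqueness). Throughout I keep in mind that a resting state is by definition a traveling wave with $V=0$, that the area $A_{\Omega}$ and the total mass $M$ are prescribed quantities (they are conserved by \eqref{eq:model}), and that \eqref{eq:model} is invariant under spatial translations, so ``uniqueness'' will mean uniqueness up to translation — this is what lets us center the domain at the origin.

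For existence I would simply plug in. Set $\Omega^0=B(0,R_0)$ with $R_0=\sqrt{A_{\Omega}/\pi}$, so $|\Omega^0|=A_{\Omega}$ and $\partial\Omega^0$ has constant curvature $\kappa\equiv 1/R_0$ (positive for a circle, as in the paper's convention). Take $c^0\equiv M/|\Omega^0|$ and $P^0\equiv \gamma/R_0+\chi_c f_{\mathrm{act}}(c^0)$, both constant, and $\bu^0=-\nabla P^0=0$ from Darcy's law \eqref{eq:model_darcy}. Because $P^0$ and $c^0$ are constant, $-\Delta P^0=0$, $-\nabla P^0\cdot\bn=0=Vn_x$ with $V=0$, and $\opdiv\big((V,0)c^0+\nabla c^0+(1-a)\nabla P^0\,c^0\big)=0$ together with $(\nabla c^0+a(V,0)c^0)\cdot\bn=0$ all hold trivially. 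The only non-trivial identity is the boundary pressure relation \eqref{eq:model_pressure_boundary}: since $f_{\mathrm{und}}$ is odd we have $f_{\mathrm{und}}(0)=0$, so it reduces to $P^0=\gamma\kappa+\chi_c f_{\mathrm{act}}(c^0)$, which is exactly the definition of $P^0$. Hence $(\Omega^0,0,P^0,\bu^0,c^0)$ is a resting state, radially symmetric by construction.

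For uniqueness, let $(\tilde{\Omega},0,P,c)$ be any resting state with $|\tilde{\Omega}|=A_{\Omega}$ and $\int_{\tilde{\Omega}}c=M$. Applying \cref{prop:characterization_tw} with $V=0$ gives at once $P\equiv p_1$ and $c\equiv M/|\tilde{\Omega}|$, both constant, while the curvature equation \eqref{eq:tw_curvature_equation} collapses, again using $f_{\mathrm{und}}(0)=0$, to $\gamma\kappa(x,y)=p_1-\chi_c f_{\mathrm{act}}(M/|\tilde{\Omega}|)$ for all $(x,y)\in\partial\tilde{\Omega}$, i.e. $\partial\tilde{\Omega}$ has constant curvature $\kappa_0$. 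The one point deserving a word of care is the classical fact that a compact $C^2$ Jordan curve of constant curvature is a circle: $\kappa_0=0$ is impossible since a zero-curvature curve is a straight line and cannot bound a bounded domain, and for $\kappa_0\neq 0$ the fundamental theorem of plane curves forces $\partial\tilde{\Omega}$ to be a circle of radius $1/|\kappa_0|$, so $\tilde{\Omega}$ is a disk. The area constraint $\pi/\kappa_0^{\,2}=A_{\Omega}$ then fixes $|\kappa_0|=1/R_0$ with $R_0=\sqrt{A_{\Omega}/\pi}$, translation invariance lets us take the center to be the origin, so $\tilde{\Omega}=B(0,R_0)=\Omega^0$; feeding this back in yields $c=M/|\Omega^0|=c^0$, then $p_1=\gamma/R_0+\chi_c f_{\mathrm{act}}(c^0)$ from the curvature equation, hence $P=P^0$ and $\bu=-\nabla P=0$. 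All the computations are routine; the only genuine input beyond \cref{prop:characterization_tw} is the rigidity of constant-curvature curves, and the only conceptual caveat is that uniqueness is modulo translation.
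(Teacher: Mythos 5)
Your proof is correct and follows essentially the same route as the paper, which simply observes that \cref{prop:characterization_es} is the $V=0$ specialisation of \cref{prop:characterization_tw}. You usefully make explicit the two points the paper leaves tacit --- that a closed constant-curvature boundary curve must be a circle (with the area constraint fixing the radius), and that uniqueness is understood up to translation --- but this is a fleshing-out of the same argument rather than a different approach.
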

	
	\subsection{Main results}
	
	First, we study the resting state given by \eqref{eq:model_resting_state} by performing a linear stability analysis around this resting state. The model depends on the parameters $a$, $\chi_c$, $\chi_u$, $M$, $\left|\Omega0\right|$ i.e. $R_0$ and the functions $f_{\mathrm{act}}$ and $f_{\mathrm{und}}$. We have chosen to consider $\chi_c$ as the bifurcation parameter, but we could also have considered $M$, $R_0$, $a$, $f'_{\mathrm{act}}\left(c^0\right)$ or $f'_{\mathrm{und}}\left(0\right)$. So in the following, to perform a stability analysis and obtain a bifurcation result, we should vary the value of the parameter $\chi_c$.
	We set:
	\begin{equation}\label{eq:def_chi_*}
		\chi_c^* = \dfrac{R_0 + \chi_u f'_{\mathrm{und}}\left(0\right)}{R_0 a c^0 f'_{\mathrm{act}}\left( c^0\right)}.
	\end{equation}
	
	We then have the following stability result.
	\begin{theo}[Stability of the resting state]\label{thm:undercooling_stability_resting_state}
			If $\chi_c < \chi_c^*$ then the resting state \eqref{eq:model_resting_state} is linearly stable. On the opposite, if $\chi_c > \chi_c^*$, then the resting state \eqref{eq:model_resting_state} is linearly unstable. 
	\end{theo}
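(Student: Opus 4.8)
We would establish \cref{thm:undercooling_stability_resting_state} by a Fourier normal-mode analysis of the linearization of \eqref{eq:model} about the resting state \eqref{eq:model_resting_state}.

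\textbf{Linearization.} Write the perturbed interface in polar coordinates as $r = R_0 + \rho(t,\theta)$, and $P = P^0 + \tilde P$, $c = c^0 + \tilde c$ with $\rho,\tilde P,\tilde c$ small. Using that $P^0$ and $c^0$ are spatially constant, that $\bu^0 = 0$, that $f_{\mathrm{und}}(0)=0$, and the curvature expansion $\kappa = R_0^{-1} - R_0^{-2}(\rho+\partial_\theta^2\rho) + O(\rho^2)$, the first-order system on the fixed disk $B(0,R_0)$ reads $\Delta\tilde P = 0$ and $\partial_t\tilde c = \Delta\tilde c$ in $B(0,R_0)$, together with, on $\partial B(0,R_0)$,
\[
  \partial_t\rho = -\partial_r\tilde P,\qquad \partial_r\tilde c = -ac^0\,\partial_t\rho,\qquad \tilde P = -\frac{\gamma}{R_0^{2}}\bigl(\rho+\partial_\theta^2\rho\bigr) + \chi_c f'_{\mathrm{act}}(c^0)\,\tilde c + \chi_u f'_{\mathrm{und}}(0)\,\partial_t\rho .
\]
Conservation of area and of \eqref{eq:M} forces the $k=0$ Fourier component of $\rho$ to vanish and that of $\tilde c$ to have zero mean, so the radially symmetric mode reduces to the Neumann Laplacian on the zero-mean subspace, which has only negative eigenvalues; it remains to treat the angular modes $k\geq1$.

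\textbf{Dispersion relation.} By rotational invariance the linearized operator is diagonal in the basis $\{e^{ik\theta}\}$, so we seek solutions $\propto e^{\lambda t}e^{ik\theta}$; then $\tilde P\propto r^{k}$ (harmonic) and $\tilde c\propto I_k(\sqrt\lambda\,r)$, the modified Bessel solution of $\lambda\phi = \phi''+r^{-1}\phi'-k^2r^{-2}\phi$ regular at $0$. Denote by $\mathcal D_k(\lambda)$ the mode-$k$ symbol of the Dirichlet-to-Neumann map of $\Delta-\lambda$ on $B(0,R_0)$ (so that $(\Delta-\lambda)u=0$ with $u=e^{ik\theta}$ on $\partial B(0,R_0)$ gives $\partial_r u = \mathcal D_k(\lambda)e^{ik\theta}$ there; the one for $\Delta$ itself is just $k/R_0$). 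Eliminating $\tilde P$ and $\tilde c$ — legitimate since for $\mathrm{Re}\,\lambda\geq0$ the parabolic part has no decoupled eigenfunction and $\mathcal D_k(\lambda)\neq0$ — yields for each $k\geq1$ the scalar dispersion relation
\[
  \lambda\left(\frac{R_0}{k} + \chi_u f'_{\mathrm{und}}(0) - \frac{\chi_c f'_{\mathrm{act}}(c^0)\,ac^0}{\mathcal D_k(\lambda)}\right) = -\,\frac{\gamma(k^2-1)}{R_0^{2}} .
\]
Since $\mathcal D_1(0)=1/R_0$, the bracket at $k=1,\lambda=0$ equals $R_0+\chi_u f'_{\mathrm{und}}(0)-R_0\,\chi_c f'_{\mathrm{act}}(c^0)ac^0$, whose vanishing is exactly $\chi_c=\chi_c^*$ from \eqref{eq:def_chi_*}.

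\textbf{The two regimes.} For $k=1$ the right-hand side vanishes, so the eigenvalues are $\lambda=0$ (the translation mode) and the roots of $\mathcal D_1(\lambda)=\chi_c f'_{\mathrm{act}}(c^0)ac^0/(R_0+\chi_u f'_{\mathrm{und}}(0))$. On $(0,\infty)$ the function $\mathcal D_1$ is real, strictly increasing ($\mathcal D_1'(\lambda)=\|u_\lambda\|_{L^2}^2/(2\pi R_0)>0$) and runs from $1/R_0$ to $+\infty$, so this equation has a solution $\lambda_*>0$ exactly when the right-hand constant exceeds $1/R_0$, i.e. when $\chi_c>\chi_c^*$: an exponentially growing mode, hence linear instability. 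Conversely assume $\chi_c<\chi_c^*$, equivalently $\beta:=\chi_c f'_{\mathrm{act}}(c^0)ac^0<(R_0+\chi_u f'_{\mathrm{und}}(0))/R_0$. The key estimate is $\mathrm{Re}\,\mathcal D_k(\lambda)\geq k/R_0$ for all $\mathrm{Re}\,\lambda\geq0$: the Green identity applied to $u$ solving $(\Delta-\lambda)u=0$, $u=e^{ik\theta}$ on the boundary, gives $2\pi R_0\,\mathcal D_k(\lambda)=\|\nabla u\|_{L^2}^2+\lambda\|u\|_{L^2}^2$, whose real part is $\geq\|\nabla u\|_{L^2}^2\geq\|\nabla(r^kR_0^{-k}e^{ik\theta})\|_{L^2}^2=2\pi k$ by Dirichlet's principle. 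Hence $\mathrm{Re}\bigl(\beta/\mathcal D_k(\lambda)\bigr)\leq\beta R_0/k$, so the bracket $B_k(\lambda)$ in the dispersion relation satisfies $\mathrm{Re}\,B_k(\lambda)\geq\frac{R_0}{k}(1-\beta)+\chi_u f'_{\mathrm{und}}(0)>0$ for every $k\geq1$ (this inequality is equivalent to $\beta R_0<R_0+k\chi_u f'_{\mathrm{und}}(0)$, implied by $\chi_c<\chi_c^*$). Then for $\lambda\neq0$ with $\mathrm{Re}\,\lambda\geq0$ the left-hand side $\lambda B_k(\lambda)$ lies off the ray $(-\infty,0]$ while the right-hand side lies in $(-\infty,0]$: a contradiction for $k\geq2$, and for $k=1$ it forces $\lambda=0$. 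So for $\chi_c<\chi_c^*$ no mode has an eigenvalue with $\mathrm{Re}\,\lambda\geq0$ except the translational zero mode, i.e. the resting state is linearly stable.

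\textbf{Main obstacle.} The principal effort is the careful first-order linearization — in particular evaluating \eqref{eq:model_pressure_boundary}--\eqref{eq:model_markers_boundary} on the moving boundary to first order and consistently using the constraints \eqref{eq:M} — together with the positivity bound $\mathrm{Re}\,\mathcal D_k(\lambda)\geq k/R_0$: attacking the latter directly via inequalities for modified Bessel functions of complex argument is delicate, whereas the energy identity combined with Dirichlet's principle settles it in one line.
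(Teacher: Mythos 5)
Your proposal is correct, and it takes a genuinely different route from the paper. The paper proceeds in two separate steps: for stability it uses an energy identity built on the auxiliary function $Q=\tilde c-ac^0\tilde P$ (\cref{lemme:model_undercooling_spectre_neg}), and for instability it writes the mode-$m$ characteristic function $H_m$ explicitly in terms of modified Bessel functions and locates a positive root of $H_1$ by a Taylor expansion near $\lambda=0$. You instead package everything into the Dirichlet-to-Neumann symbol $\mathcal{D}_k(\lambda)$ of $\Delta-\lambda$, prove $\operatorname{Re}\mathcal{D}_k(\lambda)\ge k/R_0$ on $\operatorname{Re}\lambda\ge0$ by the Green identity plus Dirichlet's principle, and read both regimes off a single scalar dispersion relation. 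This buys you two real improvements. First, the paper's energy argument only covers $0\le\chi_c\le 1/(ac^0f'_{\mathrm{act}}(c^0))$, which is strictly smaller than $\chi_c^*$, and \cref{rem:model_undercooling_spectre_neg} explicitly concedes that the full range $\chi_c<\chi_c^*$ is not proved there; your bound $\operatorname{Re}\mathcal{D}_k(\lambda)\ge k/R_0$ combined with $\operatorname{Re}(1/z)\le1/\operatorname{Re}z$ closes exactly that gap. Second, your instability argument (monotonicity of $\mathcal{D}_1$ on $(0,\infty)$ from $\mathcal{D}_1'(\lambda)=\|u_\lambda\|_{L^2}^2/(2\pi R_0)$, together with $\mathcal{D}_1(0)=1/R_0$ and $\mathcal{D}_1(\lambda)\to\infty$) produces a positive eigenvalue for every $\chi_c>\chi_c^*$, not just perturbatively near the threshold as in the paper's expansion of $H_1$. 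The only points worth tightening in a written version are the justification that every unstable eigenvalue must satisfy the dispersion relation (ruling out decoupled eigenfunctions with $\rho=0$, which you do sketch), and the verification that a root of the bracket at $k=1$ corresponds to a genuinely nonzero eigenfunction; both are routine. Your treatment of the $k=0$ mode by restricting to the area- and mass-preserving subspace is an acceptable substitute for the paper's explicit enumeration of the $m=0$ spectrum.
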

	
	In order to validate the model's relevance to cell motility, we then investigate the existence of traveling waves. This is the aim of this paper. To do so, we propose an implicit approach based on a bifurcation argument. This approach allows us to clearly identify what happens to a cell with a fixed volume as $\chi_c$ increases.
	
	\begin{theo}[Existence of traveling waves]\label{thm:undercooling_implicit_tw}
		Assume that $f_{\mathrm{act}}$ satisfies assumptions \eqref{eq:conditions_f_act}. For all $a \in \left(0,1\right]$, $\gamma >0$, $R_0>0$ and $\chi_u>0$ there exists a one parameter family of traveling wave solutions $\left(\tilde{\Omega}_\chi, V_\chi\right)$ of \cref{eq:model}, parametrized by $\chi \in \left( \chi_c^*, + \infty\right)$ such that $\left|\tilde{\Omega}_\chi\right| = \pi R_0^2$.
	\end{theo}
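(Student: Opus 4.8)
\emph{Step 1: reduction to the curvature equation.} By \cref{prop:characterization_tw}, constructing a traveling wave amounts to finding a closed curve $\partial\tilde{\Omega}$, a speed $V\ge 0$ and a constant $p_1$ solving the single scalar equation \eqref{eq:tw_curvature_equation} together with the area constraint $|\tilde{\Omega}|=\pi R_0^2$ (the pressure and the concentration being then given explicitly). I would write $\partial\tilde{\Omega}$ as a polar graph $r(\theta)=R_0\bigl(1+\rho(\theta)\bigr)$ over the circle, restrict to even $\rho$ so that $\tilde{\Omega}$ is symmetric about the $x$-axis --- which removes the $y$-translation invariance of \eqref{eq:tw_curvature_equation} --- and fix the residual $x$-translation invariance (absorbed in \eqref{eq:tw_curvature_equation} by the substitution $p_1\mapsto p_1-Vx_0$) by imposing $\int_{\tilde{\Omega}}x \dx x \dx y=0$. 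This recasts the problem as $\Phi(\rho,V,p_1;\chi_c)=0$ for a smooth map $\Phi$ between suitable scales of even functions on the circle, the only nonlocal ingredient being the smooth scalar functional $\int_{\tilde{\Omega}}e^{-aVx'}\dx x'\dx y'$. Note that \cref{prop:characterization_es} provides a trivial branch $(\rho,V)=(0,0)$, $p_1=\gamma/R_0+\chi_c f_{\mathrm{act}}(c^0)$, valid for every $\chi_c>0$, and that by \cref{prop:characterization_tw} the only solutions with $V=0$ are the disks of radius $R_0$; hence any solution off this branch is a non-trivial traveling wave.

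\emph{Step 2: linearization and the role of $\chi_c^*$.} I would compute the differential of $\Phi$ at the trivial branch. Its $\rho$-derivative is the classical operator $\dot\rho\mapsto\dot\rho+\dot\rho''$ (up to a nonzero multiplicative constant coming from the linearized curvature of the circle), an isomorphism on the space of even functions with vanishing zeroth and first Fourier modes; its $p_1$-derivative spans the constants (the zeroth mode); and, using $\partial_V c|_{V=0}=-a c^0 x$ (from the explicit formula in \cref{prop:characterization_tw}), $\partial_V f_{\mathrm{und}}(Vn_x)|_{V=0}=f'_{\mathrm{und}}(0)\,n_x$, and the contribution $x$ of the term $Vx$, its $V$-derivative equals $\beta(\chi_c)\cos\theta$ with
\[
\beta(\chi_c)=R_0+\chi_u f'_{\mathrm{und}}(0)-\chi_c\,a\,c^0 R_0\,f'_{\mathrm{act}}(c^0).
\]
Thus $D_{(\rho,V,p_1)}\Phi$ is an isomorphism precisely when $\beta(\chi_c)\ne0$; by the definition \eqref{eq:def_chi_*} of $\chi_c^*$ one has $\beta(\chi_c^*)=0$, and there the differential is Fredholm of index zero, with one-dimensional kernel spanned by the pure-speed direction $(0,1,0)$ and one-dimensional cokernel spanned by $\cos\theta$.

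\emph{Step 3: local bifurcation.} Since $\beta'(\chi_c)=-a c^0 R_0 f'_{\mathrm{act}}(c^0)\ne0$, the crossed derivative $\partial_{\chi_c}D_{(\rho,V,p_1)}\Phi$ maps the kernel vector to a nonzero multiple of $\cos\theta$, which is not in the range; the Crandall--Rabinowitz transversality condition holds, so a $C^1$ branch of non-trivial solutions $s\mapsto(\rho(s),V(s),p_1(s),\chi_c(s))$ bifurcates from $(0,0,\cdot,\chi_c^*)$ with $V'(0)\ne0$. The reflection $x\mapsto-x$ sends a right-moving wave to the mirror image of a left-moving one, which identifies the two half-branches $\pm s$; hence the bifurcation is a pitchfork, and --- consistently with the instability threshold of \cref{thm:undercooling_stability_resting_state} --- it opens toward $\chi_c>\chi_c^*$, carries $V>0$, and keeps $|\tilde{\Omega}|=\pi R_0^2$ by construction. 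Because $V'(0)\ne0$, near $\chi_c^*$ this branch is equivalently parametrized by $V\in(0,\varepsilon)$; moreover, the force-balance identity \eqref{eq:undercooling_velocity_cm} (equivalently, the $\cos\theta$-solvability condition of the curvature equation) recovers $\chi_c$ as an explicit functional $\chi_c(\rho,V)$ along the branch, with $\chi_c(\rho,V)\to\chi_c^*$ as $(\rho,V)\to(0,0)$.

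\emph{Step 4: global continuation --- the main difficulty.} The heart of the theorem is that this branch extends to all $\chi_c\in(\chi_c^*,+\infty)$. The plan is to continue in $V$: substituting $\chi_c=\chi_c(\rho,V)$ into \eqref{eq:tw_curvature_equation} leaves an equation whose linearization in $(\rho,p_1)$ is the invertible operator found in Step 2, so the implicit function theorem propagates a unique solution $(\rho(V),p_1(V))$ --- hence a traveling wave at parameter $\chi_c(V):=\chi_c(\rho(V),V)$ --- as far as the geometry stays under uniform control; one then shows $V\mapsto\chi_c(V)$ is increasing with $\chi_c(V)\to+\infty$, so the family covers $(\chi_c^*,+\infty)$. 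Equivalently, after inverting the principal (curvature) part one can invoke Rabinowitz's global alternative: the continuum is unbounded and cannot return to the trivial branch, since $\chi_c^*$ is the only value at which $D_{(\rho,V,p_1)}\Phi$ degenerates, so a priori bounds then force $\chi_c$ to sweep the whole half-line. I expect the hard step to be exactly these $V$-uniform a priori estimates --- that along the branch $\partial\tilde{\Omega}$ stays uniformly regular, embedded and non-degenerate (so the curvature operator remains an isomorphism) and that $V$ and $p_1$ remain bounded --- the obstruction being that $f_{\mathrm{und}}(Vn_x)$ makes \eqref{eq:tw_curvature_equation} fully nonlinear in the geometry; the structural hypotheses \eqref{eq:conditions_f_und}--\eqref{eq:conditions_f_act} (oddness and monotonicity of $f_{\mathrm{und}}$, boundedness of $f_{\mathrm{act}}$ by $L_c$) should be what ultimately delivers them.
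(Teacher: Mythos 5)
Your Steps 1--3 reproduce the paper's argument almost verbatim: the paper reduces to the curvature equation \eqref{eq:tw_curvature_equation}, writes $\partial\tilde{\Omega}$ as a polar graph over the circle with even $\rho$, normalizes the area and the first cosine mode, and applies Crandall--Rabinowitz to the functional $\mathcal{F}$ of \eqref{eq:undercooling_tw_bifurcation_F} at $\left(\chi_c^*,0,0,0\right)$; your kernel $\left(0,1,0\right)$, your cokernel spanned by $\cos\theta$, and your transversality argument via $\beta'\left(\chi_c\right)=-a c^0 R_0 f'_{\mathrm{act}}\left(c^0\right)\neq 0$ are exactly the four items of the paper's lemma. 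The divergence is in what comes after the local branch. The paper attempts no global continuation: its proof ends by computing $\chi_c'\left(0\right)=0$ and an explicit formula for $\chi_c''\left(0\right)$, i.e.\ by identifying the local pitchfork. Your Step 4 (continuation in $V$ by the implicit function theorem away from the degeneracy, or a Rabinowitz global alternative, resting on $V$-uniform a priori control of the geometry) is a genuinely different and more ambitious route toward the actual claim ``parametrized by $\chi\in\left(\chi_c^*,+\infty\right)$'', but you leave it as a plan: the a priori estimates you correctly single out as the crux are not supplied, so your argument, like the paper's, only delivers the branch in a neighborhood of $\chi_c^*$. Flagging this honestly is to your credit --- it points at precisely the step the paper's written proof does not contain.

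Two smaller caveats. First, your assertion that the pitchfork ``opens toward $\chi_c>\chi_c^*$'' requires the sign of $\chi_c''\left(0\right)$; the paper's formula for it involves $f''_{\mathrm{act}}$, $f'''_{\mathrm{act}}$ and $f'''_{\mathrm{und}}$ and is not sign-definite under the stated hypotheses \eqref{eq:conditions_f_act} and \eqref{eq:conditions_f_und}, which only assume $C^1$ regularity; consistency with \cref{thm:undercooling_stability_resting_state} is a heuristic, not a proof. Second, for the global alternative you would need nondegeneracy of the linearization along the \emph{nontrivial} branch, not merely the fact that the trivial branch degenerates only at $\chi_c^*$; that is part of the a priori control you defer.
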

	
	\subsection*{Plan of the paper}
	This work is organized as follows. First, in Section \ref{sec:char_tw} we prove the characterization of the traveling waves given by \cref{prop:characterization_tw} from which follows the characterization of the unique radially symmetric resting state (\cref{prop:characterization_es}). Then in Section \ref{sec:lin_stab}, we study the linear stability of the system and we prove \cref{thm:undercooling_stability_resting_state}. Section \ref{sec:tw_implicit} contain the proof of the existence of traveling waves by implicit construction (\cref{thm:undercooling_implicit_tw}). Finally, we give some conclusions.
    
    \section{Characterization of traveling waves} \label{sec:char_tw}
    
    In this section we prove the \cref{prop:characterization_tw} in order to characterize the traveling waves of the model. The proof is based on the \cref{def:tw} which defines traveling waves of the model. The \cref{prop:characterization_es} follows from the result by taking $V=0$.
    
    \begin{proof}[Proof of \cref{prop:characterization_tw}]
        We set $\tilde{\bu} = - \nabla P - \bu_{\mathrm{cm}}$ where $\bu_{\mathrm{cm}}$ is defined by \cref{eq:undercooling_velocity_cm}. The traveling wave assumption implies that $\bu_{\mathrm{cm}} = \left(V,0\right)$. Thus, setting $\phi = -P - Vx$ we have: 
		\begin{equation*}
			\tilde{\bu} = \nabla \phi.
		\end{equation*}
		Moreover, from \cref{eq:tw_poisson}, we deduce that $\phi$ satisfies $ - \Delta \phi = 0$ in $\tilde{\Omega}$ and from \eqref{eq:tw_boundary_normal_velocity} we deduce that $\phi$ also satisfies $\nabla \phi \cdot \bn = 0$ on $\partial \tilde{\Omega}$. On the one hand, we have that:
		\begin{equation*}
			\int_{\tilde{\Omega}}\left|\nabla \phi\right|^2 \dx \bx = - \int_{\tilde{\Omega}} \phi \Delta \phi \dx \bx + \int_{\partial \tilde{\Omega}} \phi \nabla \phi \cdot \bn \dx \sigma = 0.
		\end{equation*}
		On the another hand, we have that:
		\begin{equation*}
			\int_{\tilde{\Omega}}\left|\nabla \phi\right|^2 \dx \bx = \int_{\tilde{\Omega}} \left| \tilde{\bu}\right|^2  \dx \bx.
		\end{equation*}
		Thus we have that $\tilde{\bu} = 0$ on $\tilde{\Omega}$. It follows that for all $\left(x,y\right) \in \tilde{\Omega}$:
		\begin{equation*}
			P\left(x,y\right) = p_1 - Vx,
		\end{equation*}
		with $p_1 \in \mathbb{R}$.
		
		By substituting $\nabla P = - \left(V,0\right)$ in \cref{eq:tw_concentration,eq:tw_concentration_boundary}, the concentration of markers satisfies the following problem:
		\begin{subequations} \label{eq:tw_concentration_resolution}
			\begin{align}[left=\empheqlbrace]
				& \opdiv \left( \nabla c +a   \left(V,0\right)  c\right) = 0 && \text{in }  \tilde{\Omega},  \\
				& \left( \nabla c + a \left(V,0\right) c \right)\cdot \bn = 0 && \text{on }  \partial \tilde{\Omega}. 
			\end{align}
		\end{subequations}
		The non-negative solutions of \cref{eq:tw_concentration_resolution} are given by:
		\begin{equation}\label{eq:tw_concentration_solution}
			c\left(x,y\right) = c_1 e^{-aVx},
		\end{equation}
		with $c_1>0$ and $\left(x,y\right) \in \tilde{\Omega}$.
		Indeed, we can see that functions of the form \eqref{eq:tw_concentration_solution} are solutions of \cref{eq:tw_concentration_resolution}. If we assume that $c\left(x,y\right) = c_1\left(x,y\right) e^{-a Vx}$ is a solution of \cref{eq:tw_concentration_resolution}, then we have that:
		\begin{align*}
			0 & = \int_{\tilde{\Omega}} c_1\left(x,y\right) \opdiv\left(\nabla c\left(x,y\right) + a \left(V,0\right) c\left(x,y\right) \right) \dx x \dx y \\
			& = \int_{\tilde{\Omega}} \nabla c_1\left(x,y\right) \cdot \left(\nabla c\left(x,y\right) + a \left(V,0\right) c\left(x,y\right) \right) \dx x \dx y \\
			& = \int_{\tilde{\Omega}} \left| \nabla c_1\left(x,y\right) \right|^2 e^{-aVx} \dx x \dx y,
		\end{align*}
		from which we deduce that $\left| \nabla c_1 \right| = 0$ on $\tilde{\Omega}$.
		Finally, remember that the total number of markers $M$ is constant, so $c_1$ must be such that $\int_{\tilde{\Omega}} c_1 e^{-a Vx} \dx x \dx y = M$. This leads to 
		\begin{equation*}
			c_1 = \frac{M}{\int_{\tilde{\Omega}} c_1 e^{-a Vx'} \dx x' \dx y'}.
		\end{equation*}
		
		To find the curvature equation \eqref{eq:tw_curvature_equation}, we inject the expressions found for $P$ and $c$ into equation \eqref{eq:tw_boundary_pressure}.
    \end{proof}

	\section{Study of the resting state}\label{sec:lin_stab}
	In this section, we prove the \cref{thm:undercooling_stability_resting_state} about the stability of resting state. Inspired by \citet{alazard_traveling_2022}, we first linearize the problem \eqref{eq:model} around the resting state \eqref{eq:model_resting_state} (see \cref{lemme:model_undercooling_linearized_problem}). We derive an eigenvalue problem from this linearized problem and study its spectrum. To this end, we prove that, using Fourier analysis, we can decompose the study of the spectrum into the study of the spectrum of simpler problems (see \cref{lemme:model_undercooling_decomp_spectre}). Next, we demonstrate that when $\chi_c < \chi_c^*$ then the eigenvalues are all negative real parts (see \cref{lemme:model_undercooling_spectre_neg} and \cref{rem:model_undercooling_spectre_neg}). Finally, from the decomposition of the spectrum study, we derive an explicit condition on the eigenvalues (see \cref{lemme:model_undercooling_eigenvalue_cond}). Thus, when $\chi_c > \chi_c^*$, we can exhibit a positive real part eigenvalue. This proves the \cref{thm:undercooling_stability_resting_state}. 
	
	\begin{lemma} \label{lemme:model_undercooling_linearized_problem}
		The linearized problem associated to \cref{eq:model} around the resting state \eqref{eq:model_resting_state} is given by:
		\begin{subequations}\label{eq:model_linearized}
			\begin{align}[left=\empheqlbrace]
				& - \Delta \tilde{P} = 0 && \text{in } \Omega^0,  \\
				& \partial_t \rho\left(t,\theta\right) = - \partial_r \tilde{P}\left(t,R_0,\theta\right) && \theta \in \left(-\pi , \pi \right],  \\
				& \tilde{P} = - \frac{\gamma}{R_0^2} \left(\partial_{\theta \theta}^2 \rho + \rho\right) + \chi_c \tilde{c} f'_{\mathrm{act}}\left(c^0\right) - \chi_u \partial_r\tilde{P} f'_{\mathrm{und}}\left(0\right)  && \text{on } \partial \Omega^0,  \\
				& \partial_t \tilde{c} = \Delta \tilde{c} && \text{in } \Omega^0,  \\
				& \left( \nabla \tilde{c} - a  \nabla \tilde{P} \, c^0 \right)\cdot \mathbf{n} = 0 && \text{on } \partial\Omega^0.
			\end{align}
		\end{subequations}
	\end{lemma}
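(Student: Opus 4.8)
The plan is to realise the perturbed cell domain as a graph over the resting disc: write its boundary in polar coordinates as $\partial\Omega(t)=\{\,r=R_0+\rho(t,\theta)\,\}$ with $\rho$ a small perturbation, expand $P=P^0+\tilde P$ and $c=c^0+\tilde c$ with $\tilde P,\tilde c$ of the same order as $\rho$, and retain only first–order terms. The decisive simplification is that the resting state \eqref{eq:model_resting_state} is spatially constant: $P^0$ and $c^0$ do not depend on $\bx$, so $\nabla P^0=\nabla c^0=0$ and $\bu^0=0$. Consequently the transfer of boundary data from the moving interface $\partial\Omega(t)$ to the fixed circle $\partial\Omega^0$ is harmless at first order, since for any field $g=g^0+\tilde g$ with $g^0$ radial one has $g(R_0+\rho,\theta)=g^0(R_0)+\tilde g(t,R_0,\theta)+\partial_r g^0(R_0)\,\rho+O(\rho^2)$ and here $\partial_r g^0(R_0)=0$; likewise, pulling the bulk fields back to $\Omega^0$ through a time–dependent diffeomorphism generated by $\rho$ produces no extra first–order terms, because $\nabla c^0=\nabla P^0=0$.

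\textbf{Interior equations.} From Darcy's law \eqref{eq:model_darcy}, $\bu=-\nabla P$, so incompressibility \eqref{eq:model_incompressibility} gives $-\Delta P=0$, hence $-\Delta\tilde P=0$ in $\Omega^0$ at every order. Rewriting the transport equation \eqref{eq:model_markers_dynamic} as $\partial_t c=\Delta c+(1-a)\opdiv(c\,\nabla P)$ and linearising, using $\partial_t c^0=0$, $\nabla c^0=0$ and $\Delta\tilde P=0$, gives $\partial_t\tilde c=\Delta\tilde c$ in $\Omega^0$. For the geometric quantities I will use the classical expansions for a radial graph: the curvature (with the paper's convention $\kappa>0$ for a circle) is $\kappa=\tfrac{1}{R_0}-\tfrac{1}{R_0^2}\bigl(\partial_{\theta\theta}\rho+\rho\bigr)+O(\rho^2)$, the outward unit normal is $\bn=\hat{r}+O(\rho)$ with the correction purely tangential, and hence the normal velocity of the interface is $V_n=\partial_t\rho+O(\rho^2)$; these follow by differentiating $r=R_0+\rho(\theta)$ in the standard curvature formula and by differentiating the parametrisation $X(t,\theta)=(R_0+\rho)\hat{r}(\theta)$ in time.

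\textbf{Boundary conditions.} The kinematic condition \eqref{eq:model_normal_velocity} reads $V_n=\bu\cdot\bn=-\nabla\tilde P\cdot\bn$; after transfer to $\partial\Omega^0$, and since both $\nabla\tilde P$ and the normal correction are first order, it becomes $\partial_t\rho=-\partial_r\tilde P(t,R_0,\theta)$. In the pressure balance \eqref{eq:model_pressure_boundary} the left side is $P^0+\tilde P$, while on the right $\gamma\kappa=\tfrac{\gamma}{R_0}-\tfrac{\gamma}{R_0^2}(\partial_{\theta\theta}\rho+\rho)$, $\chi_c f_{\mathrm{act}}(c)=\chi_c f_{\mathrm{act}}(c^0)+\chi_c f'_{\mathrm{act}}(c^0)\tilde c$, and $\chi_u f_{\mathrm{und}}(V_n)=\chi_u f'_{\mathrm{und}}(0)\,\partial_t\rho$ because $f_{\mathrm{und}}$ is odd, so $f_{\mathrm{und}}(0)=0$; the zeroth–order terms cancel by \eqref{eq:model_resting_state_pressure}, and substituting $\partial_t\rho=-\partial_r\tilde P$ yields exactly the stated equation for $\tilde P$ on $\partial\Omega^0$. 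Finally, the no–flux condition \eqref{eq:model_markers_boundary}, written $(\nabla c-a\,c\,\nabla P)\cdot\bn=0$, has identically vanishing zeroth–order flux, so perturbing $\bn$ and the base point contributes nothing at first order and one is left with $(\nabla\tilde c-a\,c^0\,\nabla\tilde P)\cdot\bn=0$ on $\partial\Omega^0$.

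\textbf{Main obstacle.} There is no substantial analytic difficulty; the work is careful bookkeeping. The two points needing attention are (i) justifying the first–order transfer of the boundary conditions from $\partial\Omega(t)$ to $\partial\Omega^0$ — clean precisely because the base state is constant, which kills every $\partial_r g^0\,\rho$ contribution — and (ii) the linearisation of the curvature operator and of the normal velocity for a radial graph, carried out consistently with the paper's sign convention. Once these are in hand, each line of \eqref{eq:model_linearized} follows from a one–term Taylor expansion combined with the resting–state identities \eqref{eq:model_resting_state}.
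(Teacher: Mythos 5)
Your proposal is correct and follows essentially the same route as the paper: a first-order expansion of the fields and of the geometric quantities (normal, curvature, normal velocity) around the radially symmetric resting state, with the zeroth-order terms cancelling via \eqref{eq:model_resting_state} and the substitution $\partial_t\rho=-\partial_r\tilde P$ producing the $-\chi_u\,\partial_r\tilde P\,f'_{\mathrm{und}}(0)$ term. Your explicit remark that the transfer of boundary data from $\partial\Omega(t)$ to $\partial\Omega^0$ is harmless because the base state is spatially constant is a point the paper uses implicitly; it is a welcome clarification but not a different method.
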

	
	\begin{proof}
		We perform a formal expansion of the solution $\left(c,\bu\right)$ near the resting state $\left(c^0, \bu^0\right)$. Let $\varepsilon >0$ small. For all $t\geq0$ we set:
		\begin{equation}\label{eq:model_omega_pertubed}
			\Omega\left(t\right) = \left\{\left(x,y\right) = \left(r \cos \theta, r\sin \theta\right) \text{ s.t. } 0 \leq r < R_0 + \varepsilon \rho\left(t,\theta\right) \text{ and } \theta \in \left(-\pi , \pi \right] \right\},
		\end{equation}
		and for all $\bx \in \Omega\left(t\right)$:
		\begin{align*}
			& c\left(t,\bx\right) = c^0 + \varepsilon \tilde{c} \left(t,\bx\right) + \mathcal{O}\left(\varepsilon^2\right), \\
			& P\left(t,\bx\right) = P^0 + \varepsilon \tilde{P} \left(t,\bx\right) + \mathcal{O}\left(\varepsilon^2\right),
		\end{align*}
		
		Using the fact that $P^0$ satisfies \cref{eq:model_resting_state_pressure} and $P$ satisfies \cref{eq:model_darcy,eq:model_incompressibility}, we deduce that:
		\begin{equation*}
			- \Delta \tilde{P} = 0, \quad \quad \text{ in } \Omega^0.
		\end{equation*}
		
		
		The equation \ref{eq:model_omega_pertubed} induces that the boundary of the domain $\Omega$ can be parametrized and thus we obtain that for all $\theta \in \left(-\pi, \pi \right]$ we have:
		\begin{equation*}
			\bn\left(t,\theta\right) = \begin{pmatrix}
				\cos \theta \\ \sin \theta
			\end{pmatrix} - \varepsilon \dfrac{\partial_\theta \rho \left(t,\theta\right)}{R_0} \begin{pmatrix}
				- \sin \theta \\  \cos \theta
			\end{pmatrix} + \mathcal{O} \left(\varepsilon ^2\right). 
		\end{equation*}

		Thus, on $\partial \Omega \left(t\right)$, we have:
		\begin{equation*}
			\nabla P \cdot \bn = \nabla P^0 \cdot \bn - \varepsilon \partial_r \tilde{P}  + \mathcal{O} \left(\varepsilon ^2\right)
		\end{equation*}
		and also:
		\begin{equation*}
			V_n = \varepsilon \partial_t \rho \left(t, \theta\right) + \mathcal{O} \left(\varepsilon ^2\right).
		\end{equation*}
		Using \cref{eq:model_normal_velocity}, we can therefore deduce that, for all $\theta \in \left(-\pi, \pi \right]$ and $t\geq 0$, we have:
		\begin{equation*}
			\partial_t \rho\left(t,\theta\right) = - \partial_r \tilde{P}\left(t,R_0,\theta\right)
		\end{equation*}
		
		The linearisation of the curvature is given for all $\theta \in \left(-\pi,\pi\right]$ and $t\geq 0$ by:
		\begin{equation*}
			\kappa\left(t,\theta\right) = \frac{1}{R_0} - \frac{\varepsilon}{R_0^2} \left(\partial_{\theta\theta}^2 \rho \left(t,\theta\right) + \rho \left(t,\theta\right)\right)  + \mathcal{O} \left(\varepsilon ^2\right).
		\end{equation*} 
		Since
		\begin{equation*}
			f_{\mathrm{act}}\left(c\right) = f_{\mathrm{act}}\left(c^0\right) + \varepsilon \tilde{c} f'_{\mathrm{act}}\left(c^0\right) + \mathcal{O} \left(\varepsilon ^2\right)
		\end{equation*}
		and
		\begin{equation*}
			f_{\mathrm{und}}\left(V_n\right) = \varepsilon \partial_t \rho\left(t,\theta\right) f'_{\mathrm{und}}\left(0\right) +  \mathcal{O} \left(\varepsilon ^2\right),
		\end{equation*}
		we have that \cref{eq:model_pressure_boundary} leads to that on $\partial \Omega^0$ we have:
		\begin{equation*}
			\tilde{P} = - \frac{\gamma}{R_0^2} \left(\partial_{\theta \theta}^2 \rho + \rho\right) + \chi_c \tilde{c} f'_{\mathrm{act}}\left(c^0\right) + \chi_u \partial_t \rho f'_{\mathrm{und}}\left(0\right).
		\end{equation*}
		
		Moreover, using the fact that $c^0$ satisfies \cref{eq:model_resting_state_concentration} and $c$ satisfies \cref{eq:model_markers_dynamic,eq:model_markers_boundary}, we deduce that in $\Omega^0$: 
		\begin{equation*}
			\partial_t \tilde{c} = \Delta \tilde{c}.
		\end{equation*}
		The boundary condition \eqref{eq:model_markers_boundary} leads to:
		\begin{equation*}
			\left(\nabla \tilde{c} - a \nabla \tilde{P} \, c^0\right) \cdot \bn = 0 \quad \quad \text{ on } \partial \Omega^0.
		\end{equation*}
	\end{proof}
	
	As the problem \eqref{eq:model_linearized} is radially symmetric, the spectral analysis can be performed using Fourier analysis.
	
	The eigenvalue problem associated with the linearized problem \eqref{eq:model_linearized} is given by:
	\begin{subequations}\label{eq:model_eigenvalue}
		\begin{align}[left=\empheqlbrace]
			& - \Delta \tilde{P} = 0 && \text{in } \Omega^0,  \\
			& \lambda \rho\left(\theta\right) = -\partial_r \tilde{P}\left(R_0,\theta\right) && \theta \in \left(-\pi , \pi \right],  \\
			& \tilde{P} = - \frac{\gamma}{R_0^2} \left(\partial_{\theta \theta}^2 \rho + \rho\right) + \chi_c \tilde{c} f'_{\mathrm{act}}\left(c^0\right) - \chi_u \partial_r\tilde{P} f'_{\mathrm{und}}\left(0\right)  && \text{on } \partial \Omega^0,  \\
			& \lambda \tilde{c} = \Delta \tilde{c} && \text{in } \Omega^0,  \\
			& \left( \nabla \tilde{c} - a  \nabla \tilde{P} \, c^0 \right)\cdot \mathbf{n} = 0 && \text{on } \partial\Omega^0,
		\end{align}
	\end{subequations}
	where $\lambda \in \mathbb{C}$.
	
	\begin{lemma}\label{lemme:model_undercooling_decomp_spectre} 
		Let $\lambda \in \mathbb{C}$. The eigenfunctions, satisfying \cref{eq:model_eigenvalue} associated with the eigenvalue $\lambda$ are of the form:
		\begin{align*}
			& \rho \left(\theta \right) = \sum_{m \in \mathbb{N}} \rho_{cm} \cos\left(m \theta\right) + \sum_{m \in \mathbb{N}} \rho_{sm} \sin\left(m \theta\right), \\
			& c \left(r,\theta \right) = \sum_{m \in \mathbb{N}} c_{cm}\left(r\right) \cos\left(m \theta\right) + \sum_{m \in \mathbb{N}} c_{sm}\left(r\right) \sin\left(m \theta\right), \\
			& P \left(r,\theta \right) = \sum_{m \in \mathbb{N}} P_{cm}\left(r\right) \cos\left(m \theta\right) + \sum_{m \in \mathbb{N}} P_{sm}\left(r\right) \sin\left(m \theta\right),
		\end{align*}
		with for all $m \in \mathbb{N}$, $\left(\rho_{cm}, c_{cm}, P_{cm}\right)$ (resp. $\left(\rho_{sm}, c_{sm}, P_{sm}\right)$) satisfying:
		\begin{subequations}\label{eq:model_eigenvalue_decomposition}
			\begin{align}[left=\empheqlbrace]
				& - \left(\partial_{rr}^2 + \frac{1}{r} \partial_r - \frac{m^2}{r^2} \right)  P_{cm} = 0 && r \in \left(0, R_0\right), \label{eq:model_eigenvalue_decomposition_lap_pressure} \\
				& \lambda \rho_{cm} = -\partial_r P_{cm}\left(R_0\right), && \label{eq:model_eigenvalue_decomposition_velocity_boundary} \\
				& P_{cm}\left(R_0\right)= \frac{\gamma}{R_0^2} \left(m^2 -1\right) \rho_{cm} + \chi_c c_{cm}\left(R_0\right) f'_{\mathrm{act}}\left(c^0\right) &&  \label{eq:model_eigenvalue_decomposition_pressure_boundary}\\
				& \quad \quad \quad \quad \quad \quad \quad \quad \quad \quad \quad \quad- \chi_u \partial_r P_{cm}\left(R_0\right) f'_{\mathrm{und}}\left(0\right), \nonumber &&  \\
				& \lambda c_{cm} = \left(\partial_{rr}^2 + \frac{1}{r} \partial_r - \frac{m^2}{r^2} \right) c_{cm} && r \in \left(0, R_0\right), \label{eq:model_eigenvalue_decomposition_concentration}  \\
				& \partial_r c_{cm}\left(R_0\right) - a  \partial_r P_{cm}\left(R_0\right) \, c^0  = 0. && \label{eq:model_eigenvalue_decomposition_concentration_boundary}
			\end{align}	
		\end{subequations}
	\end{lemma}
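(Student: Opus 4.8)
The plan is to exploit the rotational invariance of the eigenvalue problem \eqref{eq:model_eigenvalue}: since $\Omega^0=B(0,R_0)$ is a disk centred at the origin and all the coefficients appearing in \eqref{eq:model_eigenvalue} (namely $\gamma$, $R_0$, $c^0$, $\chi_c$, $\chi_u$, $a$, $f'_{\mathrm{act}}(c^0)$ and $f'_{\mathrm{und}}(0)$) are constants, the whole system commutes with rotations about the origin. Hence it is separable in the angular variable $\theta$, and writing every unknown as a Fourier series in $\theta$ decouples it mode by mode.

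Concretely, I would first expand $\rho$ in a Fourier series on the circle and $\tilde c,\tilde P$ in Fourier series in $\theta$ with $r$-dependent coefficients,
\[
\rho(\theta)=\sum_{m}\rho_{cm}\cos(m\theta)+\rho_{sm}\sin(m\theta),
\]
\[
\tilde c(r,\theta)=\sum_{m}c_{cm}(r)\cos(m\theta)+c_{sm}(r)\sin(m\theta),\qquad
\tilde P(r,\theta)=\sum_{m}P_{cm}(r)\cos(m\theta)+P_{sm}(r)\sin(m\theta),
\]
which is legitimate because a solution of \eqref{eq:model_eigenvalue} is smooth in $\Omega^0$ by interior elliptic regularity (for $\tilde P$ harmonic and $\tilde c$ a $(\Delta-\lambda)$-eigenfunction) and the traces on $\partial\Omega^0$ define $L^2$ functions on the circle, so these series converge. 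Then I would substitute the expansions into each of the five relations of \eqref{eq:model_eigenvalue}, using the polar form of the Laplacian $\Delta=\partial_{rr}^2+\frac1r\partial_r+\frac1{r^2}\partial_{\theta\theta}^2$, so that $\Delta\big(u(r)\cos(m\theta)\big)=\big(\partial_{rr}^2u+\frac1r\partial_r u-\frac{m^2}{r^2}u\big)\cos(m\theta)$; the identity $\partial_{\theta\theta}^2\rho+\rho\mapsto(1-m^2)\rho_{cm}$ on each mode; and the fact that the outward unit normal on $\{r=R_0\}$ is radial, so that $\nabla(\cdot)\cdot\bn=\partial_r(\cdot)$ there. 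Matching the coefficients of $\cos(m\theta)$ (resp.\ $\sin(m\theta)$), which is justified by orthogonality of the trigonometric system, turns \eqref{eq:model_eigenvalue_decomposition_lap_pressure} and \eqref{eq:model_eigenvalue_decomposition_concentration} into the interior ODEs and \eqref{eq:model_eigenvalue_decomposition_velocity_boundary}, \eqref{eq:model_eigenvalue_decomposition_pressure_boundary}, \eqref{eq:model_eigenvalue_decomposition_concentration_boundary} into the boundary relations for $(\rho_{cm},c_{cm},P_{cm})$; since all coefficients are $\theta$-independent, the cosine and the sine families satisfy the same reduced system \eqref{eq:model_eigenvalue_decomposition}. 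Conversely, assembling for each $m$ a solution of \eqref{eq:model_eigenvalue_decomposition} into these series yields a solution of \eqref{eq:model_eigenvalue}, so the displayed form characterizes the eigenfunctions exactly.

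I do not expect any genuine difficulty here: the statement is separation of variables for a rotationally symmetric linear problem. The only points requiring care are computational — correctly writing the linearized curvature operator, the Laplacian and the normal derivative in polar coordinates on the mode $\cos(m\theta)$, and checking that no boundary condition couples different values of $m$ (which it cannot, again because every coefficient is constant). If one wants to be fully rigorous about convergence and term-by-term differentiation of the series, that follows from interior analyticity of harmonic and of $(\Delta-\lambda)$-eigenfunctions together with standard elliptic boundary regularity, but at the level of this paper it can be taken for granted.
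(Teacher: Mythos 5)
Your argument is correct and is essentially the same as the paper's: the authors also justify the decomposition by the linearity of the eigenvalue problem, the independence of the cosine and sine modes, and the identity $\partial_{\theta\theta}^2(\cos(m\theta))=-m^2\cos(m\theta)$, only stated in one line rather than spelled out. Your version simply makes explicit the polar form of the Laplacian, the radial normal derivative, and the orthogonality used to match coefficients.
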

	
	\begin{proof}

		The result follows from the linearity of \cref{eq:model_eigenvalue}, the independence of the cosine and sine modes and the fact that for all $m \in \mathbb{N}$:
		\begin{equation*}
			\partial_{\theta \theta}^2 \rho_{m\lambda} = -m^2 \rho_{m\lambda}.
		\end{equation*}
	\end{proof}
	
	\begin{rema}
		We deduce from \cref{lemme:model_undercooling_decomp_spectre} and the independence of the cosine and sine perturbations that studying the eigenvalues of \cref{eq:model_eigenvalue} is equivalent to the study, for all $m \in \mathbb{N}$, of those of \cref{eq:model_eigenvalue_decomposition}. Indeed, if $\lambda \in \mathbb{C}$ is an eigenvalue of \cref{eq:model_eigenvalue_decomposition} associated with the eigenfunctions $\left(\rho_\lambda, c_\lambda, P_\lambda\right)$ then $\lambda$ is an eigenvalue of \cref{eq:model_eigenvalue} associated with the eigenfunctions:
		\begin{align*}
			& \rho \left(\theta\right)= \rho_{\lambda} \cos\left(m \theta\right), \\
			& c \left(r, \theta\right)= c_{\lambda}\left(r\right) \cos\left(m \theta\right), \\
			& P \left(r, \theta\right)= P_{\lambda}\left(r\right) \cos\left(m \theta\right).
		\end{align*}
	\end{rema}

	\begin{lemma}\label{lemme:model_undercooling_spectre_neg}
		If $m\geq 1$ and $0 \leq \chi_c \leq \frac{1}{a c^0 f'_{\mathrm{act}}\left(c^0\right)}$, then all the eigenvalues of \cref{eq:model_eigenvalue_decomposition} have non-positive real parts.
	\end{lemma}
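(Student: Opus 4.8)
The plan is to reduce the radial system \eqref{eq:model_eigenvalue_decomposition} to a single dispersion relation and then to compare two estimates on the Dirichlet--to--Neumann coefficient of the concentration at $r=R_0$. Fix $m\ge 1$, let $\lambda\in\mathbb{C}$ be an eigenvalue with eigentriple $(\rho,c,P):=(\rho_{cm},c_{cm},P_{cm})\not\equiv 0$, and set $w:=R_0\,\partial_r c(R_0)/c(R_0)$ once this is shown to make sense. Since $\lambda=0$ (which can occur only for $m=1$, as a translation mode) trivially has non-positive real part, I argue by contradiction and assume $\mathrm{Re}\,\lambda>0$.

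\emph{Lower bound $\mathrm{Re}\,w>m$.} Equation \eqref{eq:model_eigenvalue_decomposition_lap_pressure} together with regularity at $r=0$ forces $P(r)=\alpha r^m$, so $\partial_r P(R_0)=\frac{m}{R_0} P(R_0)$. Multiplying \eqref{eq:model_eigenvalue_decomposition_concentration} by $\overline{c}\,r$ and integrating over $(0,R_0)$ (the boundary term at $0$ vanishes because the regular solution behaves like $r^m$, so $c(0)=0$) yields
\[
R_0\,\overline{c(R_0)}\,\partial_r c(R_0)=\lambda\,\|c\|^2+E,\qquad \|c\|^2:=\int_0^{R_0}|c|^2 r\,dr,\quad E:=\int_0^{R_0}r\,|\partial_r c|^2\,dr+m^2\int_0^{R_0}\frac{|c|^2}{r}\,dr\ge 0 .
\]
Taking real parts and using $\mathrm{Re}\,\lambda>0$: if $c(R_0)=0$ the right-hand side forces $E=0$, hence $c\equiv 0$, then \eqref{eq:model_eigenvalue_decomposition_concentration_boundary} gives $\partial_r P(R_0)=0$, hence $P\equiv 0$, and then \eqref{eq:model_eigenvalue_decomposition_velocity_boundary} with $\lambda\ne 0$ gives $\rho=0$, contradicting $(\rho,c,P)\not\equiv 0$; the same reasoning excludes $\alpha=0$. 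So $c(R_0)\ne 0$ and $P(R_0)\ne 0$, $w$ is well defined, and the identity gives $\mathrm{Re}\,w=(\mathrm{Re}(\lambda)\|c\|^2+E)/|c(R_0)|^2$. Together with the Hardy-type bound $E\ge m\,|c(R_0)|^2$ (from $r|\partial_r c|^2+\frac{m^2}{r}|c|^2\ge 2m|\partial_r c|\,|c|\ge m\,\partial_r(|c|^2)$, integrated over $(0,R_0)$, using $c(0)=0$) and $\|c\|^2>0$, this gives the strict inequality $\mathrm{Re}\,w>m$.

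\emph{Upper bound and conclusion.} From \eqref{eq:model_eigenvalue_decomposition_velocity_boundary} and \eqref{eq:model_eigenvalue_decomposition_concentration_boundary}, $\rho=-\frac{m}{R_0\lambda}P(R_0)$ and $\partial_r c(R_0)=\frac{ac^0 m}{R_0}P(R_0)$, hence $c(R_0)=\frac{ac^0 m}{w}P(R_0)$; substituting these and $\partial_r P(R_0)=\frac{m}{R_0} P(R_0)$ into \eqref{eq:model_eigenvalue_decomposition_pressure_boundary} and dividing by $P(R_0)\ne 0$ gives the dispersion relation
\[
\frac{\chi_c\,a\,c^0\,m\,f'_{\mathrm{act}}(c^0)}{w}=D,\qquad D:=1+\frac{\chi_u\,m\,f'_{\mathrm{und}}(0)}{R_0}+\frac{\gamma(m^2-1)\,m}{R_0^3\,\lambda}.
\]
Since $m^2-1\ge 0$, $\gamma>0$ and $\mathrm{Re}(1/\lambda)=\mathrm{Re}(\lambda)/|\lambda|^2>0$, we get $\mathrm{Re}\,D\ge 1+\chi_u m f'_{\mathrm{und}}(0)/R_0>1$, hence $|D|>1$. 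Therefore $\mathrm{Re}\,w\le|w|=\chi_c\,a\,c^0\,m\,f'_{\mathrm{act}}(c^0)/|D|<\chi_c\,a\,c^0\,m\,f'_{\mathrm{act}}(c^0)\le m$, the last step using $\chi_c\le 1/(a c^0 f'_{\mathrm{act}}(c^0))$. This contradicts $\mathrm{Re}\,w>m$; hence no eigenvalue has positive real part, which proves the lemma.

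The routine ingredients are solving the two radial ODEs and the integrations by parts. The one genuinely delicate point is the complex case: the coupled energy estimates do not close by themselves, and the argument works only because one isolates the scalar quantity $w$, bounds $\mathrm{Re}\,w$ below using the concentration equation alone (Hardy inequality plus the sign of $\mathrm{Re}\,\lambda$) and above using the dispersion relation --- where it is precisely the strictly positive undercooling contribution $\chi_u f'_{\mathrm{und}}(0)$ (and, for $m\ge 2$, the surface tension) that makes $|D|>1$ and thus pins $\mathrm{Re}\,w$ below $m$ throughout the stable range $\chi_c\le 1/(a c^0 f'_{\mathrm{act}}(c^0))$.
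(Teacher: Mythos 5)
Your proof is correct, but it follows a genuinely different route from the paper's. The paper works with the auxiliary function $Q = c - a c^0 P$ and a single global energy identity over the disk: integrating $\bar c\,\Delta c$ by parts and substituting the boundary condition for $P$ produces a quadratic form in $\nabla Q$, $\nabla P$, $\nabla P\cdot\bn$ and $\rho$ whose coefficients are all non-positive precisely when $1 - a c^0 \chi_c f'_{\mathrm{act}}(c^0) \ge 0$, whence $\mathrm{Re}\,\lambda \le 0$ directly. You instead collapse the system to a scalar dispersion relation in the Dirichlet-to-Neumann quotient $w = R_0\,\partial_r c(R_0)/c(R_0)$ and trap $\mathrm{Re}\,w$ between two bounds: below by $m$ via the one-dimensional energy identity for the concentration plus the AM--GM/Hardy estimate $E \ge m|c(R_0)|^2$ (this is where $\mathrm{Re}\,\lambda>0$ enters), and above by $\chi_c a c^0 m f'_{\mathrm{act}}(c^0) \le m$ via $|D|>1$. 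All the steps check out, including the degenerate cases $c(R_0)=0$ and $P\equiv 0$. What your approach buys, beyond avoiding the somewhat ad hoc choice of $Q$, is sharpness: since $|D| \ge \mathrm{Re}\,D \ge 1 + \chi_u m f'_{\mathrm{und}}(0)/R_0$, your upper bound actually gives $|w| \le \chi_c a c^0 m f'_{\mathrm{act}}(c^0) R_0/(R_0 + \chi_u m f'_{\mathrm{und}}(0))$, which is $\le m$ for every $m\ge 1$ as soon as $\chi_c \le \chi_c^*$; combined with the strict lower bound this proves non-positivity of the real parts on the whole range $\chi_c < \chi_c^*$, i.e.\ it settles the extension conjectured in \cref{rem:model_undercooling_spectre_neg}, which the paper's energy argument does not reach. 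You may want to state that strengthened version explicitly.
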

	
	\begin{proof}
		Let $m \geq 1$. Let $\lambda \in \mathbb{C}$ be a eigenvalue of \cref{eq:model_eigenvalue_decomposition}. Let $\rho_m$, $c_m$ and $P_m$ satisfy \cref{eq:model_eigenvalue_decomposition}. For all $r \in \left(0,R_0\right)$ and $\theta \in \left(-\pi, \pi\right]$, we set:
		\begin{align*}
			& \rho \left(\theta\right)= \rho_{m} \cos\left(m \theta\right), \\
			& c \left(r, \theta\right)= c_{m}\left(r\right) \cos\left(m \theta\right), \\
			& P \left(r, \theta\right)= P_{m}\left(r\right) \cos\left(m \theta\right).
		\end{align*}
		We also set $Q = c - a c^0 P$. Thus $Q$ follows the following problem:
		\begin{subequations}\label{eq:model_eigenvalue_decomposition_Q}
			\begin{align}[left=\empheqlbrace]
				& -\Delta Q=\Delta c  &&\text{in }  \Omega^0,\\
				& \left(1-a \chi_c c^0 f'_{\mathrm{act}}\left(c^0\right)\right)P=\frac{\gamma}{R_0^2} \left( m^2-1\right) \rho +\chi_c f'_{\mathrm{act}}\left(c^0\right) Q + \lambda \chi_u f'_{\mathrm{und}}\left(0\right) \rho &&\text{on }  \partial \Omega^0 ,\\
				& \nabla Q \cdot \bn = 0 &&\text{on } \partial \Omega^0.
			\end{align}
		\end{subequations}
		
		Thus using \cref{eq:model_eigenvalue_decomposition,eq:model_eigenvalue_decomposition_Q} we compute:
		\begin{align*}
			\lambda \int_{\Omega^0} \left|c\right|^2 \dx \bx & = \int_{\Omega^0} \bar{c} \Delta c \dx \bx  =  \int_{\Omega^0} \left(\bar{Q} + a c^0 \bar{P}\right) \Delta Q \dx \bx \\
			& = - \int_{\Omega^0} \left| \nabla Q\right|^2 \dx \bx - a c^0\int_{\Omega^0} \nabla \bar{P} \cdot \nabla Q \dx \bx \\
			& = - \int_{\Omega^0} \left| \nabla Q\right|^2 \dx \bx - a c^0\int_{\partial \Omega^0} Q \nabla \bar{P} \cdot \bn \dx \sigma.
		\end{align*}
		Since 
		\begin{align*}
			\int_{\partial \Omega^0} Q \nabla \bar{P} \cdot \bn \dx \sigma & = \dfrac{1}{\chi_c f'_{\mathrm{act}}\left(c^0\right)} \left[ \int_{\partial \Omega^0} \left(1-a c^0 \chi_c f'_{\mathrm{act}}\left(c^0\right)\right)P \nabla \bar{P} \cdot \bn \dx \sigma \right. \\
			&\quad \quad \quad \quad \quad \quad \left. - \left(\frac{\gamma}{R_0^2}\left(m^2-1\right) + \lambda \chi_u f'_{\mathrm{und}}\left(0\right)\right)\int_{\partial \Omega^0} \rho \nabla \bar{P} \cdot \bn \dx \sigma \right] \\
			& = \dfrac{1}{\chi_c f'_{\mathrm{act}}\left(c^0\right)} \left[ \left(1-a c^0 \chi_c f'_{\mathrm{act}}\left(c^0\right) \right) \int_{\Omega^0} \left|\nabla P\right|^2 \dx \bx \right. \\
			& \quad \quad \quad \quad \quad \quad \quad \quad +\bar{\lambda} \frac{\gamma}{R_0^2}\left(m^2-1\right) \int_{\partial \Omega^0} \left|\rho\right|^2 \dx \sigma \\
			& \quad \quad \quad \quad \quad \quad \quad \quad \left. + \chi_u f'_{\mathrm{und}}\left(0\right)  \int_{\partial \Omega^0} \left|\nabla P \cdot \bn\right|^2 \dx \sigma \right].
		\end{align*}
		
		We deduce that for all $\lambda \in \mathbb{C}$ eigenvalue of $\mathcal{A}_m$ we have:
		\begin{multline*}
			\lambda \int_{\Omega^0} \left|c\right|^2 \dx \bx + \dfrac{\bar{\lambda}}{\chi_c f'_{\mathrm{act}} \left(\tilde{c}\right)} \frac{\gamma}{R_0^2} \left( m^2-1\right)\int_{\partial \Omega^0} \left|\rho\right|^2 \dx \sigma \\
			= - \int_{\Omega^0} \left| \nabla Q\right|^2 \dx \bx - a \tilde{c} \dfrac{1-a c^0 \chi_c f'_{\mathrm{act}}\left(c^0\right) }{\chi_c f'_{\mathrm{act}}\left(c^0\right)} \int_{\Omega^0} \left|\nabla P\right|^2 \dx \bx \\ - \dfrac{\chi_u f'_{\mathrm{und}}\left(0\right)}{\chi_c f'_{\mathrm{act}}\left(c^0\right)}  \int_{\partial \Omega^0} \left|\nabla P \cdot \bn\right|^2 \dx \sigma.
		\end{multline*}
		The result follows from this equality.
	\end{proof}
	
	\begin{rema}\label{rem:model_undercooling_spectre_neg}
		It seems that the result holds for all $\chi_c>0$ as long as $\chi_c < \chi_c^*$. We do not prove this here. However, the study of traveling waves in \cref{sec:tw_implicit} suggests that the result may indeed extend to all $\chi_c$ such that $\chi_c <\chi_c^*$.
	\end{rema}

	\begin{lemma}
		Let $m \in \mathbb{N}$. The eigenfunctions of \cref{eq:model_eigenvalue_decomposition} associated with the eigenvalue $\lambda \in \mathbb{C}$ are given by:
		\begin{equation*}
			\begin{pmatrix}
				\rho \\ c\left(r\right) \\ P\left(r\right)
			\end{pmatrix} = \begin{pmatrix}
				\hat{\rho}_{m \lambda} \\ \hat{c}_{m \lambda} I_m\left(-r \lambda^{\frac{1}{2}}\right) \\ \hat{P}_{m \lambda} r^m
			\end{pmatrix},
		\end{equation*}
		where $0\leq r < R_0$ and $\left(\hat{\rho}_{m \lambda}, \hat{c}_{m \lambda}, \hat{P}_{m \lambda} \right) \in \mathbb{C}^3$ solution of:
		\begin{subequations}\label{eq:model_eigenfunction}
			\begin{align}[left=\empheqlbrace]
				& \lambda \hat{\rho}_{m \lambda} = - m R_0^{m-1} \hat{P}_{m \lambda},\\
				& R_0^m \hat{P}_{m \lambda} = \left(\frac{\gamma}{R_0^2} \left(m^2 - 1\right) + \lambda \chi_u f'_{\mathrm{und}}\left(0\right)\right) \hat{\rho}_{m \lambda} + \chi_c f'_{\mathrm{act}}\left(c^0\right) I_m\left(-R_0 \lambda^{\frac{1}{2}}\right) \hat{c}_{m \lambda}, \\
				& \frac{\lambda^{\frac{1}{2}}}{2} \left(I_{m-1}\left(-R_0 \lambda^{\frac{1}{2}}\right) + I_{m+1}\left(-R_0 \lambda^{\frac{1}{2}}\right) \right) \hat{c}_{m \lambda} = \lambda a c^0 \hat{\rho}_{m \lambda}.
			\end{align}
		\end{subequations}
	\end{lemma}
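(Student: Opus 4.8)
The plan is to solve the two radial ODEs appearing in \cref{eq:model_eigenvalue_decomposition} explicitly, using regularity at the origin to pin down the solution up to one complex constant each, and then to substitute the resulting families into the three boundary relations at $r=R_0$, which produces exactly the algebraic system \cref{eq:model_eigenfunction}. Concretely, I would first look at the pressure equation \cref{eq:model_eigenvalue_decomposition_lap_pressure}: it is an Euler (equidimensional) ODE on $(0,R_0)$, whose general solution is $P_{cm}(r)=\alpha r^m+\beta r^{-m}$ when $m\geq 1$ (and $\alpha+\beta\log r$ when $m=0$). Since an eigenfunction must lie in the natural energy space, hence remain bounded as $r\to 0^+$, the singular branch is discarded and $P_{cm}(r)=\hat P_{m\lambda}\,r^m$ for some $\hat P_{m\lambda}\in\mathbb C$.

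Next I would treat the concentration equation \cref{eq:model_eigenvalue_decomposition_concentration}, which reads $\big(\partial_{rr}^2+\tfrac1r\partial_r-\tfrac{m^2}{r^2}-\lambda\big)c_{cm}=0$. After the change of variable $z=-\lambda^{\frac12}r$ — with any fixed determination of the square root, the ambiguity being harmless since $I_m(-z)=(-1)^m I_m(z)$ — this becomes the modified Bessel equation of order $m$, whose solution space is spanned by $I_m(z)$ and $K_m(z)$. Discarding $K_m$, which is singular at the origin, leaves $c_{cm}(r)=\hat c_{m\lambda}\,I_m(-\lambda^{\frac12}r)$ for some $\hat c_{m\lambda}\in\mathbb C$.

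Then I would substitute $P_{cm}(r)=\hat P_{m\lambda}r^m$ and $c_{cm}(r)=\hat c_{m\lambda}I_m(-\lambda^{\frac12}r)$ into \cref{eq:model_eigenvalue_decomposition_velocity_boundary,eq:model_eigenvalue_decomposition_pressure_boundary,eq:model_eigenvalue_decomposition_concentration_boundary}. With $\partial_r(r^m)|_{r=R_0}=mR_0^{m-1}$, the velocity boundary condition is immediately the first line of \cref{eq:model_eigenfunction}; feeding $\partial_r P_{cm}(R_0)=mR_0^{m-1}\hat P_{m\lambda}=-\lambda\hat\rho_{m\lambda}$ into the pressure boundary condition converts the $-\chi_u f'_{\mathrm{und}}(0)\,\partial_r P_{cm}$ term into $+\lambda\chi_u f'_{\mathrm{und}}(0)\hat\rho_{m\lambda}$ and yields the second line; and differentiating $I_m$ via the recurrence $I_m'(z)=\tfrac12\big(I_{m-1}(z)+I_{m+1}(z)\big)$ gives $\partial_r c_{cm}(R_0)=-\tfrac{\lambda^{1/2}}{2}\hat c_{m\lambda}\big(I_{m-1}(-R_0\lambda^{\frac12})+I_{m+1}(-R_0\lambda^{\frac12})\big)$, so that the flux condition \cref{eq:model_eigenvalue_decomposition_concentration_boundary}, again using $\partial_r P_{cm}(R_0)=-\lambda\hat\rho_{m\lambda}$, reproduces the third line. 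The converse implication — any triple solving \cref{eq:model_eigenfunction} gives functions of the stated form satisfying \cref{eq:model_eigenvalue_decomposition} — is read off the same identities.

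There is essentially no conceptual obstacle here; the statement is a computation. The only points that need a little care are: (i) justifying the exclusion of the singular homogeneous solutions ($r^{-m}$ or $\log r$ for the pressure, $K_m$ for the concentration) at $r=0$, which is exactly where the implicit regularity/function-space requirement on eigenfunctions is used; (ii) the Bessel bookkeeping — fixing a branch of $\lambda^{\frac12}$ and applying $I_m'=\tfrac12(I_{m-1}+I_{m+1})$ and the chain rule with the right sign; and (iii) isolating the degenerate case $\lambda=0$ (and $m=0$ for the pressure), where the radial operators reduce to the Laplacian and the claimed Bessel form must be read as its $\lambda\to 0$ limit $I_m(-\lambda^{\frac12}r)\sim (-\lambda^{\frac12}r/2)^m/m!\propto r^m$.
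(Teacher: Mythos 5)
Your proposal is correct and follows essentially the same route as the paper's proof: solve the radial Euler equation for $P_{cm}$ and the modified Bessel equation for $c_{cm}$, select the solutions regular at the origin, and substitute into the three boundary relations using $\partial_r P_{cm}(R_0)=mR_0^{m-1}\hat P_{m\lambda}=-\lambda\hat\rho_{m\lambda}$ and $I_m'=\tfrac12\left(I_{m-1}+I_{m+1}\right)$. You are in fact slightly more careful than the paper about discarding the singular branches, the choice of branch of $\lambda^{\frac12}$, and the degenerate cases $\lambda=0$ and $m=0$, which the paper passes over silently.
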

	
	\begin{proof}
		From \cref{lemme:model_undercooling_decomp_spectre}, we deduce that for all $0\leq r < R_0$ that:
		\begin{equation*}
			\rho = \hat{\rho}_{m \lambda} \in \mathbb{C}
		\end{equation*}
		and since $P$ satisfies \cref{eq:model_eigenvalue_decomposition_lap_pressure}, the Laplace equation:
		\begin{equation*}
			P\left(r \right) = \hat{P}_{m \lambda} r^m,
		\end{equation*}
		with $\hat{P}_{m \lambda} \in \mathbb{C}$.
		Moreover, as $c$ satisfies \cref{eq:model_eigenvalue_decomposition_concentration} and thus, by the definition of Bessel functions, we have:
		\begin{equation*}
			c\left(r \right) = \hat{c}_{m \lambda} I_m\left(-r \lambda^{\frac{1}{2}}\right),
		\end{equation*}
		where $0\leq r<R_0$, $\hat{c}_{m \lambda} \in \mathbb{C}$ and $I_m$ is the modified Bessel function of the first kind of order $m$.
		
		As $\rho$ and $P$ satisfy \cref{eq:model_eigenvalue_decomposition_velocity_boundary}, it follows that:
		\begin{equation*}
			\lambda \hat{\rho}_{m\lambda} = - m \hat{P}_{m \lambda} R_0^{m-1}.
		\end{equation*}
		Likewise, since $\rho$, $c$ and $P$ satisfy \cref{eq:model_eigenvalue_decomposition_pressure_boundary}, we have that:
		\begin{equation*}
			\hat{P}_{m\lambda} R_0^m = \frac{\gamma}{R_0^2} \left(m^2 - 1\right) \hat{\rho}_{m \lambda} + \chi_c f'_{\mathrm{act}}\left(c^0\right) \hat{c}_{m \lambda} I_m\left(-R_0 \lambda^{\frac{1}{2}}\right) - m \chi_u f'_{\mathrm{und}}\left(0\right) R_0^{m-1} \hat{P}_{m \lambda}.
		\end{equation*}
		Combining this with the previous equality, we obtain that:
		\begin{equation*}
			R_0^m \hat{P}_{m \lambda} = \left(\frac{\gamma}{R_0^2} \left(m^2 - 1\right) + \lambda \chi_u f'_{\mathrm{und}}\left(0\right)\right) \hat{\rho}_{m \lambda} + \chi_c f'_{\mathrm{act}}\left(c^0\right) I_m\left(-R_0 \lambda^{\frac{1}{2}}\right) \hat{c}_{m \lambda} .
		\end{equation*}
		Finally as $\rho$, $c$ and $P$ satisfy \cref{eq:model_eigenvalue_decomposition_concentration_boundary,eq:model_eigenvalue_decomposition_velocity_boundary}, we have:
		\begin{equation*}
			- \lambda^{\frac{1}{2}} I'_m\left(-R_0 \lambda^{\frac{1}{2}}\right)\hat{c}_{m \lambda} + \lambda a c^0 \hat{\rho}_{m \lambda} = 0.
		\end{equation*}
		Using the properties of Bessel functions, we deduce that:
		\begin{equation*}
			\frac{\lambda^{\frac{1}{2}}}{2} \left(I_{m-1}\left(-R_0 \lambda^{\frac{1}{2}}\right) + I_{m+1}\left(-R_0 \lambda^{\frac{1}{2}}\right)\right) \hat{c}_{m \lambda} =  \lambda a c^0 \hat{\rho}_{m \lambda}.
		\end{equation*}
	\end{proof}
	
	From the previous lemma, we derive the following result on the eigenvalues of \cref{eq:model_eigenvalue_decomposition}.
	
	\begin{lemma}\label{lemme:model_undercooling_eigenvalue_cond}
		Let $m\in \mathbb{N}$. Let $\lambda \in \mathbb{C}$ be an eigenvalue of \cref{eq:model_eigenvalue_decomposition}, then $\lambda$ is such that
		\begin{equation*}
			H_m\left(\lambda\right) = 0,
		\end{equation*}
		with $H_m$ defined for all $z \in \mathbb{C}$ by:
		\begin{multline*}
			H_m\left(z\right) = z m \frac{a \chi_c c^0 f'_{\mathrm{act}}}{R_0} I_m\left(-R_0 z^{\frac{1}{2}}\right) \\
			+ \frac{z^{\frac{1}{2}}}{2} \left[z\left(1+\frac{m}{R_0}\chi_u f'_{\mathrm{und}}\left(0\right)\right) + \frac{\gamma}{R_0^3}m \left(m^2-1\right) \right] \left[I_{m-1}\left(-R_0 z^{\frac{1}{2}}\right)+I_{m+1}\left(-R_0 z^{\frac{1}{2}}\right)\right] .
		\end{multline*}
	\end{lemma}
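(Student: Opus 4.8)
The plan is to read the previous lemma as reducing the eigenvalue problem \eqref{eq:model_eigenvalue_decomposition} to the finite‑dimensional homogeneous linear system \eqref{eq:model_eigenfunction} for the triple $(\hat\rho_{m\lambda},\hat c_{m\lambda},\hat P_{m\lambda})\in\mathbb{C}^3$: by that lemma every eigenfunction of \eqref{eq:model_eigenvalue_decomposition} associated with $\lambda$ is of the stated form and the triple solves \eqref{eq:model_eigenfunction}. Hence, if $\lambda$ is an eigenvalue, \eqref{eq:model_eigenfunction} has a nonzero solution, so the $3\times3$ coefficient matrix $\mathcal{M}_m(\lambda)$ of that system is singular, i.e. $\det\mathcal{M}_m(\lambda)=0$. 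The whole proof then reduces to computing this determinant and recognizing it, up to a nonvanishing factor, as $H_m(\lambda)$.

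Writing the three equations of \eqref{eq:model_eigenfunction} in the unknowns $(\hat\rho_{m\lambda},\hat c_{m\lambda},\hat P_{m\lambda})$ (in the order: velocity boundary condition, pressure boundary condition, concentration flux condition), the rows of $\mathcal{M}_m(\lambda)$ are
\begin{equation*}
\bigl(\lambda,\ 0,\ mR_0^{m-1}\bigr),\qquad
\Bigl(-\tfrac{\gamma}{R_0^2}(m^2-1)-\lambda\chi_u f'_{\mathrm{und}}(0),\ -\chi_c f'_{\mathrm{act}}(c^0)\,I_m(-R_0\lambda^{1/2}),\ R_0^m\Bigr),
\end{equation*}
\begin{equation*}
\Bigl(-\lambda a c^0,\ \tfrac{\lambda^{1/2}}{2}\bigl(I_{m-1}(-R_0\lambda^{1/2})+I_{m+1}(-R_0\lambda^{1/2})\bigr),\ 0\Bigr).
\end{equation*}
I would expand $\det\mathcal{M}_m(\lambda)$ along the third column, whose entries are $mR_0^{m-1}$, $R_0^m$, $0$; this produces two $2\times2$ minors. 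Collecting the two resulting terms that carry the common factor $I_{m-1}(-R_0\lambda^{1/2})+I_{m+1}(-R_0\lambda^{1/2})$ and factoring $R_0^{m-1}$ out of the whole expression, an elementary rearrangement yields
\begin{equation*}
\det\mathcal{M}_m(\lambda)=-R_0^{m}\,H_m(\lambda),
\end{equation*}
with $H_m$ exactly the function in the statement. Since $-R_0^m\neq0$, singularity of $\mathcal{M}_m(\lambda)$ is equivalent to $H_m(\lambda)=0$, which is the asserted necessary condition (and the argument in fact gives the converse as well).

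The computation is entirely elementary, so the only place where care is needed is the bookkeeping: keeping track of the sign produced by the cofactor expansion, and of the factor $\tfrac12$ that originates from the Bessel identity $I_m'=\tfrac12(I_{m-1}+I_{m+1})$ already invoked in the previous lemma. One should also note the radial mode $m=0$: there $mR_0^{m-1}=0$ and the first equation of \eqref{eq:model_eigenfunction} degenerates to $\lambda\hat\rho_{0\lambda}=0$, but the determinant identity above remains valid and reduces, using $I_{-1}=I_1$, to $\det\mathcal{M}_0(\lambda)=-\lambda^{3/2}I_1(-R_0\lambda^{1/2})=-H_0(\lambda)$, so no separate treatment is required.
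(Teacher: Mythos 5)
Your proposal is correct and follows essentially the same route as the paper: the lemma is stated there without an explicit proof, being derived directly from the preceding lemma, and the intended argument is precisely the one you give, namely that a nontrivial solution of the $3\times 3$ homogeneous system \eqref{eq:model_eigenfunction} forces its determinant to vanish, and this determinant equals $-R_0^{m}H_m(\lambda)$ (your matrix, cofactor expansion, and the $m=0$ reduction via $I_{-1}=I_1$ all check out). One caveat: your parenthetical claim that the argument also gives the converse is not correct --- for $m\ge 2$ one has $H_m(0)=0$ and the algebraic system admits the nontrivial solution $(0,\hat c,0)$, yet the corresponding eigenfunction vanishes identically because $I_m(0)=0$, so $\lambda=0$ is not an eigenvalue; the paper makes exactly this point in the proposition that follows. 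Since the lemma only asserts the forward implication, this does not affect the validity of your proof.
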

	
	\begin{prop}
		\begin{enumerate}
			\item \label{item:undercooling_prop_stabilite} If $0 \leq \dfrac{\chi_c}{\chi_c^*}<1$, then \cref{eq:model_eigenvalue} admits $\lambda = 0$ as an eigenvalue of multiplicity three and all its other eigenvalues have a negative real part.
			\item \label{item:undercooling_prop_instabilite} If $\dfrac{\chi_c}{\chi_c^*}>1$, then \cref{eq:model_eigenvalue} admits a positive eigenvalue. 
		\end{enumerate}
	\end{prop}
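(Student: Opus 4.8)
The plan rests on the Fourier decomposition of \cref{lemme:model_undercooling_decomp_spectre}: the spectrum of \cref{eq:model_eigenvalue} is the union over $m\in\mathbb N$ of the spectra of the reduced problems \cref{eq:model_eigenvalue_decomposition}, and by \cref{lemme:model_undercooling_eigenvalue_cond} every nonzero eigenvalue of the $m$-th mode is a root of $H_m$. So I would: (a) compute the $\lambda=0$ eigenspace directly; (b) rewrite $H_m(\lambda)=0$ in a usable form and locate its nonzero roots; (c) conclude according to the sign of $\chi_c-\chi_c^*$.

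For (a), set $\lambda=0$ in \cref{eq:model_eigenvalue}: then $\tilde P$ is harmonic with zero normal derivative on $\partial\Omega^0$, hence constant; $\tilde c$ is then harmonic with zero Neumann data, hence constant; and the pressure boundary condition forces $\partial_{\theta\theta}^2\rho+\rho$ to be a constant, so $\rho(\theta)=C+A\cos\theta+B\sin\theta$. The area constraint $\int_{-\pi}^{\pi}\rho\,\dx\theta=0$ removes $C$ (which then ties the constant value of $\tilde P$ to that of $\tilde c$), so the eigenspace is spanned by the uniform–concentration mode ($\tilde c$ constant, $\rho\equiv 0$) and the two translations $\rho=\cos\theta$, $\rho=\sin\theta$ (with $\tilde c=\tilde P\equiv 0$): dimension three. (Without the area constraint the pure dilation $\rho\equiv\text{const}$ would add a fourth zero mode.)

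For (b), put $x=-R_0\lambda^{1/2}$ and use the recurrences $I_{m-1}+I_{m+1}=2I_m'$ and $I_m'=I_{m+1}+\tfrac mx I_m$; then $H_m(\lambda)=0$ (for $\lambda\neq0$) becomes $\chi_c=\Phi_m(x)$ with
\[
\Phi_m(x)=\frac{\bigl(x^2\bigl(1+\tfrac m{R_0}\chi_u f'_{\mathrm{und}}(0)\bigr)+\tfrac{\gamma}{R_0}m(m^2-1)\bigr)\,I_m'(x)}{m\,a\,c^0 f'_{\mathrm{act}}(c^0)\,x\,I_m(x)}.
\]
For $m=0$ the pressure and the shape decouple, leaving the radial Neumann heat equation on $\Omega^0$, whose eigenvalues are $0$ and $-j_{1,k}^2/R_0^2<0$ ($j_{1,k}$ the positive zeros of $J_1$). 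For $m\ge 1$, a positive eigenvalue $\lambda=x^2/R_0^2>0$ corresponds to a real root $x>0$ of $\chi_c=\Phi_m(x)$, and the decisive estimate is $\Phi_m(x)>\chi_c^*$ for all $x>0$ and all $m\ge 1$: discarding the nonnegative curvature term and using $I_m'(x)/I_m(x)>\tfrac mx$ gives $\Phi_m(x)>\tfrac{1+(m/R_0)\chi_u f'_{\mathrm{und}}(0)}{a c^0 f'_{\mathrm{act}}(c^0)}\ge\chi_c^*$, while for $m=1$ one has the exact identity $\Phi_1(x)=\chi_c^*\,g(x)$ with $g(x)=xI_1'(x)/I_1(x)$ strictly increasing from $g(0^+)=1$ to $+\infty$ (from the Riccati identity $g'=(x^2+1-g^2)/x$ and $g(x)<\sqrt{x^2+1}$). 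Hence: if $\chi_c<\chi_c^*$ no mode has a positive eigenvalue, whereas if $\chi_c>\chi_c^*$ the intermediate value theorem produces $x>0$ with $g(x)=\chi_c/\chi_c^*$, i.e.\ a positive eigenvalue of the $m=1$ mode, hence of \cref{eq:model_eigenvalue}. This is item \ref{item:undercooling_prop_instabilite}.

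For item \ref{item:undercooling_prop_stabilite} it remains to exclude nonzero eigenvalues with $\mathrm{Re}\,\lambda\ge0$ when $\chi_c<\chi_c^*$. The $m=0$ spectrum is handled above. For $m=1$, the energy identity from the proof of \cref{lemme:model_undercooling_spectre_neg} loses its $\int_{\partial\Omega^0}|\rho|^2$ term (factor $m^2-1=0$), and therefore forces every nonzero eigenvalue of that mode to be \emph{real}; with the previous paragraph it is then $\le0$. For $m\ge 2$ and $0\le\chi_c\le\tfrac1{a c^0 f'_{\mathrm{act}}(c^0)}$ (a subinterval of $\{\chi_c<\chi_c^*\}$), \cref{lemme:model_undercooling_spectre_neg} already gives $\mathrm{Re}\,\lambda\le0$. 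The remaining window $\tfrac1{a c^0 f'_{\mathrm{act}}(c^0)}<\chi_c<\chi_c^*$ with $m\ge 2$ is the main obstacle: there the coefficient of $\int_{\Omega^0}|\nabla P|^2$ in that identity changes sign, so the energy method no longer closes (even after using the Dirichlet–Neumann relation $\int_{\Omega^0}|\nabla P|^2=\tfrac{R_0}{m}\int_{\partial\Omega^0}|\nabla P\cdot\bn|^2$ valid on the $m$-th mode), and one must argue directly from $\chi_c=\Phi_m(x)$, now with $x$ ranging over the closed sector of half-angle $\tfrac\pi4$ about the negative real axis — the locus $\{\mathrm{Re}\,x^2\ge0,\ \mathrm{Re}\,x\le 0\}$ of those $x$ for which $z=x^2/R_0^2$ has $\mathrm{Re}\,z\ge0$ — and show that $\Phi_m$ takes no real value $<\chi_c^*$ there, extending the real-axis inequality $\Phi_m>\chi_c^*$ to the sector via estimates on modified Bessel functions of complex argument (or, alternatively, by an argument-principle count of the zeros of $H_m$ in $\{\mathrm{Re}\,z>0\}$). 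This is precisely the step the energy estimate of \cref{lemme:model_undercooling_spectre_neg} does not reach, as acknowledged in \cref{rem:model_undercooling_spectre_neg}; granting it, all nonzero eigenvalues have negative real part for $\chi_c<\chi_c^*$, which together with (a) gives item \ref{item:undercooling_prop_stabilite}.
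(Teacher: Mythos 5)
Your proposal is correct where the paper's proof is correct, and it reaches the conclusion by a genuinely different (and in one place stronger) route. For item~2 the paper expands $H_1$ around $z=0$, exhibits an approximate root $z_1$ of the truncated expansion that changes sign at $\chi_c=\chi_c^*$, and asserts that it tracks a true root $\lambda_1$ of $H_1$; this is a local, perturbative argument valid only for $\chi_c$ near the threshold. Your reformulation $H_m(\lambda)=0\Leftrightarrow\chi_c=\Phi_m(x)$ with $x=-R_0\lambda^{1/2}$, together with the monotonicity of $g(x)=xI_1'(x)/I_1(x)$ from $1$ to $+\infty$ (your Riccati argument checks out), produces a positive eigenvalue of the $m=1$ mode for \emph{every} $\chi_c>\chi_c^*$ by the intermediate value theorem, and the bound $\Phi_m(x)>\chi_c^*$ for $x>0$, $m\ge1$ cleanly rules out positive real eigenvalues below threshold — none of which appears in the paper. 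For item~1 both arguments share the same hole: the paper ``concludes'' by invoking \cref{rem:model_undercooling_spectre_neg}, which explicitly states that the extension of \cref{lemme:model_undercooling_spectre_neg} to the full range $\chi_c<\chi_c^*$ (and to complex eigenvalues of the modes $m\ge2$) is \emph{not} proved; you identify exactly the same missing step (the window $1/(ac^0f'_{\mathrm{act}}(c^0))<\chi_c<\chi_c^*$ and the off-axis sector) and say ``granting it,'' which is an honest rendering of what the paper actually does. Your observation that for $m=1$ the energy identity forces nonzero eigenvalues to be real (the $\bar\lambda(m^2-1)$ term drops) is a genuine addition that closes the $m=1$ mode completely. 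One small discrepancy: your kernel count excludes the dilation mode $\rho\equiv\mathrm{const}$ by invoking the area constraint, but \cref{eq:model_eigenvalue} as written does not impose $\int\rho=0$, and the paper's own multiplicity-three count \emph{includes} the dilation mode (its two $m=0$ zero-modes plus the $m=1$ translation); you arrive at three by instead counting both translations. Neither bookkeeping is fully consistent with treating sine and cosine modes symmetrically, but this does not affect the substance of either argument.
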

	
	\begin{proof}
		When $m=0$, for all $z\in \mathbb{C}$, we have:
		\begin{equation*}
			H_0\left(z\right) = z^{\frac{3}{2}} I_1\left(-R_0 z^{\frac{1}{2}}\right).
		\end{equation*}
		$z=0$ is a solution of $H_0\left(z\right) = 0$. This eigenvalue is associated with the eigenfunctions defined for all $r\in \left[0, R_0\right)$ by:
		\begin{equation*}
			\begin{pmatrix}
				\hat{\rho}_{00}^1 \\ \hat{c}_{00}^1\left(r\right) \\ \hat{P}_{00}^1\left(r\right)
			\end{pmatrix} = \begin{pmatrix}
				1 \\ 0 \\ -\dfrac{\gamma}{R_0^2}
			\end{pmatrix} \quad \text{ and } \quad \begin{pmatrix}
				\hat{\rho}_{00}^2 \\ \hat{c}_{00}^2\left(r\right) \\ \hat{P}_{00}^2\left(r\right)
			\end{pmatrix} = \begin{pmatrix}
				0 \\ 1 \\ \chi_c f'_{\mathrm{act}} \left(c^0\right)
			\end{pmatrix}.
		\end{equation*} 
		
		The other roots of $H_0$ are given by $\lambda_{0k} = - \dfrac{x_{1k}^2}{R_0^2}$ where $x_{1k} \in \mathbb{R}$ is the $k^{\text{th}}$ root of $J_1$, the Bessel function of the first kind of order $1$. We have that the eigenvalue $\lambda_{0k}<0$ is associated with the eigenfunction defined for all $r\in \left[0, R_0\right)$ by:
		\begin{equation*}
			\begin{pmatrix}
				\hat{\rho}_{0k} \\ \hat{c}_{0k}\left(r\right) \\ \hat{P}_{0k}\left(r\right)
			\end{pmatrix} = \begin{pmatrix}
				0 \\ J_0\left(\frac{x_{1k}}{R_0} r  \right) \\ \chi_c f'_{\mathrm{act}} \left(c^0\right) J_0(x_{1k})
			\end{pmatrix}.
		\end{equation*} 
		
		When $m=1$, for all $z \in \mathbb{C}$ we have:
		\begin{multline*}
			H_1\left(z\right) = \frac{z^{\frac{3}{2}}}{2} \left(1+\frac{1}{R_0}\chi_u f'_{\mathrm{und}}\left(0\right)\right) \left[I_{0}\left(-R_0 z^{\frac{1}{2}}\right)+I_{2}\left(-R_0 z^{\frac{1}{2}}\right)\right] \\ + z  \frac{a \chi_c c^0 f'_{\mathrm{act}}}{R_0} I_1\left(-R_0 z^{\frac{1}{2}}\right)
		\end{multline*}
		Thus $z=0$ is a solution of $H_1\left(z\right) = 0$. This eigenvalue is associated with the eigenfunction defined for all $r \in \left[0,R_0\right)$ by:
		\begin{equation*}
			\begin{pmatrix}
				\hat{\rho}_{10} \\ \hat{c}_{10}\left(r\right) \\ \hat{P}_{10}\left(r\right)
			\end{pmatrix} = \begin{pmatrix}
				1 \\ 0 \\ 0
			\end{pmatrix}.
		\end{equation*} 
		
		If $m \geq 2$ then $z=0$ is solution of $H_m \left(z\right) = 0$. Nevertheless, since $\gamma >0$, from \cref{eq:model_eigenfunction} and $I_m\left(0\right) = 0$, we deduce that $\lambda =  0$ is associate to the zero eigenfunction defined for all $r \in \left[0,R_0\right)$ by:
		\begin{equation*}
			\begin{pmatrix}
				\hat{\rho}_{m0} \\ \hat{c}_{m0}\left(r\right) \\ \hat{P}_{m0}\left(r\right)
			\end{pmatrix} = \begin{pmatrix}
				0 \\ 0 \\ 0
			\end{pmatrix}
		\end{equation*} and $\lambda = 0$ is not an eigenvalue of \cref{eq:model_eigenvalue_decomposition} when $m\geq2$. 
		We can therefore conclude about \cref{item:undercooling_prop_stabilite} using \cref{rem:model_undercooling_spectre_neg}. 
		
		In order to exhibit a positive eigenvalue, we expand $H_1$ around $0$. For all $z$ close to $0$, we have:
		\begin{multline*}
			H_1\left(z\right)= \frac{z^{\frac{3}{2}}}{2} \left(1 + \frac{\chi_u f'_{\mathrm{und}}\left(0\right)}{R_0} - a c^0 \chi_c f'_{\mathrm{act}}\left(c^0\right) \right. \\ \left. + \frac{R_0^2 z}{8} \left(3+ \frac{3 \chi_u f'_{\mathrm{und}}\left(0\right)}{R_0} - a c^0 \chi_c f'_{\mathrm{act}}\left(c^0\right) \right)  \right) + \mathcal{O}\left(\left|z\right|^{\frac{7}{2}} \right)
		\end{multline*}
		The function $z \mapsto 1 + \frac{\chi_u f'_{\mathrm{und}}\left(0\right)}{R_0} - a c^0 \chi_c f'_{\mathrm{act}}\left(c^0\right) + \frac{R_0^2 z}{8} \left(3+ \frac{3 \chi_u f'_{\mathrm{und}}\left(0\right)}{R_0} - a c^0 \chi_c f'_{\mathrm{act}}\left(c^0\right) \right) $ admits $z_1$ as root, with $z_1$ defined by:
		\begin{equation*}
			z_1 = \dfrac{8\left(ac^0 \chi_c f'_{\mathrm{act}}\left(c^0\right) - 1 - \frac{\chi_u f'_{\mathrm{und}}\left(0\right)}{R_0} \right)}{R_0^2 \left(3 \left(1+\frac{\chi_u f'_{\mathrm{und}}\left(0\right)}{R_0}\right) - a c^0 \chi_c f'_{\mathrm{act}}\left(c^0\right)\right)}.
		\end{equation*}
		$z_1 \in \mathbb{R}$ changes sign and becomes positive when $\chi_c$ exceeds $\dfrac{R_0+\chi_u f'_{\mathrm{und}}\left(0\right)}{R_0 a c^0 f'_{\mathrm{act}}\left(c^0\right)} = \chi_c^*$. Furthermore, $z_1$ approaches a true root $\lambda_1$ of $H_1$. Indeed, when $a c^0 \chi_c f'_{\mathrm{act}}\left(c^0\right)$ close to $1 + \frac{\chi_u f'_{\mathrm{und}}\left(0\right)}{R_0}$, we have:
		\begin{equation*}
			\lambda_1 = \frac{4}{R_0^2} \left(a c^0 \chi_c f'_{\mathrm{act}}\left(c^0\right) - 1 + \frac{\chi_u f'_{\mathrm{und}}\left(0\right)}{R_0}\right) + o\left(\left|a c^0 \chi_c f'_{\mathrm{act}}\left(c^0\right) - 1 + \frac{\chi_u f'_{\mathrm{und}}\left(0\right)}{R_0}\right|\right)
		\end{equation*}
		associated with the eigenfunction defined for all $r \in \left[0,R_0\right)$ by:	
		\begin{equation*}
			\begin{pmatrix}
				\hat{\rho}_{1 \lambda_1} \\ \hat{c}_{1 \lambda_1}\left(r\right) \\ \hat{P}_{1 \lambda_1}\left(r\right)
			\end{pmatrix} = \renewcommand{\arraystretch}{3} \begin{pmatrix}
				\dfrac{-\chi_c f'_{\mathrm{act}}I_1\left(-R_0 \lambda_1^{\frac{1}{2}}\right)}{R_0 + \chi_u f'_{\mathrm{und}}\left(0\right)} \\ \lambda_1 I_1\left(-r \lambda_1^{\frac{1}{2}}\right) \\ \dfrac{\chi_c f'_{\mathrm{act}}I_1\left(-R_0 \lambda_1^{\frac{1}{2}}\right) \lambda_1}{R_0 + \chi_u f'_{\mathrm{und}}\left(0\right)} r
			\end{pmatrix}.
		\end{equation*} 
		Thus when $\chi_c >\chi_c^*$, \cref{eq:model_eigenvalue} admits $\lambda_1$ as positive eigenvalue.
	\end{proof}
	

	\section{Existence of traveling waves} \label{sec:tw_implicit}
	
	In this section, we prove the \cref{thm:undercooling_implicit_tw} using a bifurcation argument. The arguments of the proof are inspired by those used in \citet{alazard_traveling_2022} in the case where $\chi_u = 0$. In order to prove the theorem, we define a functional $\mathcal{F}$ to which we can apply the Crandall-Rabinowitz bifurcation theorem \citep{crandall_bifurcation_1971} at the point $\chi_c = \chi_c^*$.
	
	Since the disc of radius $R_0$ with $V=0$ is a solution of \eqref{eq:tw_def}, we seek other solutions of \cref{eq:tw_def} such that the domain $\tilde{\Omega}$ is a perturbation of the disc. We therefore look for $\tilde{\Omega}$ such that:
	\begin{equation*}
		\tilde{\Omega} = \left\{ \left(r \cos\left(\theta\right), r \sin \left(\theta\right)\right) \text{ s.t. } 0 \leq r \leq R_0+ \rho\left(\theta\right) \text{ and } \theta \in \left(-\pi,\pi\right] \right\}
	\end{equation*}
	with $\rho : \mathbb{R} \rightarrow \left(-R_0, + \infty\right)$ a $2 \pi$-periodic function satisfying:
	\begin{equation*}
		\int_{-\pi}^{\pi} \left(R_0+\rho\left(\theta\right)\right)^2 - R_0^2 \dx \theta = 0.
	\end{equation*} This latter condition ensures that $\left|\tilde{\Omega}\right| = \pi R_0^2$.
	The boundary of domain $\tilde{\Omega}$ is parameterized by:
	\begin{equation*}
		\left\{ \left(R_0 + \rho\left(\theta\right) \right) \cos \theta , \left(R_0 + \rho\left(\theta\right) \right) \sin \theta \right\} \quad \text{ for } \theta \in \left(- \pi, \pi\right].
	\end{equation*}
	For all $\theta \in \left(-\pi, \pi \right]$, the outward normal vector to $\partial \tilde{\Omega}$ is given by:
	\begin{equation*}
		\bn\left(\theta\right) = \begin{pmatrix}
			n_1\left(\theta \right) \\ n_2\left(\theta\right)
		\end{pmatrix} = \dfrac{1}{\left(\left(R_0+\rho\left(\theta\right)\right)^2 + \rho'\left(\theta\right)^2\right)^{\frac{1}{2}}} \begin{pmatrix}
			\left(R_0+\rho\left(\theta\right)\right) \cos \theta + \rho'\left(\theta \right) \sin \theta \\ \left(R_0+\rho\left(\theta\right)\right) \sin \theta - \rho'\left(\theta \right) \cos \theta
		\end{pmatrix} 
	\end{equation*}
	and the mean curvature by:
	\begin{equation}\label{eq:undercooling_tw_mean_curvature}
		\kappa\left(\theta\right) = \dfrac{\left(R_0 + \rho\left(\theta\right)\right)^2 + 2 \rho'\left(\theta\right)^2 - \left(R_0 + \rho\left(\theta\right)\right) \rho''\left(\theta\right)}{\left(\left(R_0+\rho\left(\theta\right)\right)^2 + \rho'\left(\theta\right)^2\right)^{\frac{3}{2}}}.
	\end{equation}
	The condition on the boundary domain \cref{eq:tw_curvature_equation} can be expressed as:
	\begin{multline}\label{eq:undercooling_tw_curvature_equation_theta}
		\gamma \kappa \left(\theta\right) + V \left(R_0 + \rho\left(\theta\right)\right)\cos \theta + \chi_c f_{\mathrm{act}} \left(c_1\left(V,\rho\right) e^{-a V \left(R_0 + \rho\left(\theta\right)\right) \cos \theta}\right) \\ + \chi_u f_{\mathrm{und}}\left(V n_1\left(\theta\right) \right) = p_1,
	\end{multline}
	where $\theta \in \left(-\pi, \pi\right]$ and $c_1\left(V,\rho\right) = \dfrac{M}{\int_{-\pi}^{\pi} \int_{0}^{R_0 + \rho\left(\theta\right)} e^{-a V r \cos \theta} r \dx r \dx \theta}$.
	
	As we are looking for traveling waves in the $x$-direction, we restrict ourselves to domains $\tilde{\Omega}$ symmetrical about the $x$-axis. We then introduce the functional spaces:
	\begin{align*}
		& X = \left\{ \rho \in \mathcal{C}^{2,\alpha}_{\mathrm{per}}\left(-\pi,\pi\right) : \rho\left(\theta\right) = \rho\left(-\theta\right), \forall \theta \in \left(-\pi,\pi\right)  \right\}, \\
		& Y = \left\{ \rho \in \mathcal{C}^{0,\alpha}_{\mathrm{per}}\left(-\pi,\pi\right) : \rho\left(\theta\right) = \rho\left(-\theta\right), \forall \theta \in \left(-\pi,\pi\right)  \right\}.
	\end{align*}
	Therefore, the existence of a boundary $\partial \tilde{\Omega}$ solving \cref{eq:tw_curvature_equation} is equivalent to the existence of a function $\rho \in X$ solving \cref{eq:undercooling_tw_curvature_equation_theta}.
	
	Let $\mathcal{F}: \mathbb{R} \times X \times \mathbb{R} \times \mathbb{R} \rightarrow Y \times \mathbb{R} \times \mathbb{R} $ the functional defined for all $\left(\chi_c, \rho, V, p_1\right) \in \mathbb{R} \times X \times \mathbb{R} \times \mathbb{R}$ by:
	\begin{multline}\label{eq:undercooling_tw_bifurcation_F}
		\mathcal{F} \left(\chi_c, \rho, V, p_1\right) = \left( \gamma \kappa\left(\theta\right) + \chi_c f_{\mathrm{act}} \left(c_1\left(V,\rho\right) e^{-a V \left(R_0 + \rho\left(\theta\right)\right) \cos \theta}\right) \right. \\ + \chi_u f_{\mathrm{und}}\left(V n_1\left(\theta\right) \right) 
		+ V \left(R_0 + \rho\left(\theta\right)\right)\cos \theta - p_1 - \dfrac{\gamma}{R_0} \, ; \\
		\left. \int_{-\pi}^{\pi}  \left(R_0+\rho\left(\theta\right)\right)^2 - R_0^2 \dx \theta \, ; \int_{-\pi}^{\pi}  \rho\left(\theta\right) \cos \theta \dx \theta \right).
	\end{multline}
	
	
	\begin{lemma}
		The functional $\mathcal{F}$ defined by \eqref{eq:undercooling_tw_bifurcation_F} satisfies the following properties:
		\begin{enumerate}
			\item \label{item:undercooling_tw_F_1} $\mathcal{F}\left(\chi_c, 0, 0, 0\right) = 0$ for all $\chi_c \in \mathbb{R}$.
			\item \label{item:undercooling_tw_F_2} $\mathrm{Ker} \, \partial_{\left(\rho,V,p_1\right)} \mathcal{F}\left(\chi_c^*, 0,0,0\right)$ is a one dimensional subspace of $X \times \mathbb{R} \times \mathbb{R}$ spanned by $\left(0,1,0\right)$.
			\item \label{item:undercooling_tw_F_3} $\mathrm{Range} \, \partial_{\left(\rho,V,p_1\right)}  \mathcal{F}\left(\chi_c^*, 0,0,0\right)$ is a closed subspace of $Y \times \mathbb{R} \times \mathbb{R}$ of codimension $1$.
			\item \label{item:undercooling_tw_F_4} $\partial_{\chi_c} \partial_{\left(\rho,V,p_1\right)}  \mathcal{F}\left(\chi_c^*, 0,0,0\right)\left[ 0, 1, 0 \right] \notin \mathrm{Range} \, \partial_{\left(\rho,V,p_1\right)}  \mathcal{F}\left(\chi_c^*, 0,0,0\right)$.
		\end{enumerate}
	\end{lemma}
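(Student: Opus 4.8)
The plan is to reduce all four items to an explicit description of the bounded linear operator $L := \partial_{(\rho,V,p_1)}\mathcal{F}(\chi_c^*,0,0,0)\colon X\times\mathbb{R}\times\mathbb{R}\to Y\times\mathbb{R}\times\mathbb{R}$; once $L$ is known, each item is short. Property \ref{item:undercooling_tw_F_1} is a direct substitution at $\rho\equiv 0$: the curvature formula \eqref{eq:undercooling_tw_mean_curvature} gives $\kappa\equiv 1/R_0$, oddness of $f_{\mathrm{und}}$ gives $f_{\mathrm{und}}(0)=0$, and $c_1(0,0)=M/(\pi R_0^2)=c^0$, so the first component of $\mathcal{F}(\chi_c,0,0,0)$ collapses to the constant affine term subtracted in \eqref{eq:undercooling_tw_bifurcation_F}, while the two integral components vanish at $\rho=0$.

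To get $L$, I would differentiate the first component of $\mathcal{F}$ at a general point $(\chi_c,0,0,0)$, term by term, in a direction $(h,v,q)\in X\times\mathbb{R}\times\mathbb{R}$. The curvature term contributes $-\tfrac{\gamma}{R_0^2}(h''+h)$, exactly as in the linearisation behind \cref{lemme:model_undercooling_linearized_problem}. For the active term $\chi_c f_{\mathrm{act}}\!\big(c_1(V,\rho)e^{-aV(R_0+\rho)\cos\theta}\big)$ one uses that the argument equals $c^0$ at the base point, that $\partial_V c_1(V,0)\big|_{V=0}=0$ (the $V$-derivative of the denominator of $c_1$ is proportional to $\int_{-\pi}^{\pi}\cos\theta\,\dx\theta=0$), and that at $V=0$ the exponential equals $1$ with vanishing $\rho$-derivative; this gives $-\chi_c f'_{\mathrm{act}}(c^0)\tfrac{c^0}{\pi R_0}\int_{-\pi}^{\pi}h\,\dx\theta$ from the $\rho$-variation and $-\chi_c\,a c^0 R_0 f'_{\mathrm{act}}(c^0)\,v\cos\theta$ from the $V$-variation. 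The undercooling term contributes $\chi_u f'_{\mathrm{und}}(0)\,v\cos\theta$ (using $n_1|_{\rho=0}=\cos\theta$), the transport term $V(R_0+\rho)\cos\theta$ contributes $R_0\,v\cos\theta$, and $-p_1$ contributes $-q$; the two integral components give $2R_0\int_{-\pi}^{\pi}h\,\dx\theta$ and $\int_{-\pi}^{\pi}h\cos\theta\,\dx\theta$, with no $V$- or $p_1$-dependence. The crucial point is that the total coefficient of $v\cos\theta$ is $R_0+\chi_u f'_{\mathrm{und}}(0)-\chi_c R_0 a c^0 f'_{\mathrm{act}}(c^0)$, which vanishes \emph{exactly} at $\chi_c=\chi_c^*$ by \eqref{eq:def_chi_*}. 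Hence $L$ does not depend on $v$:
\begin{multline*}
L[h,v,q]=\Big(-\tfrac{\gamma}{R_0^2}(h''+h)-\beta\int_{-\pi}^{\pi}h\,\dx\theta-q\ ;\\
2R_0\int_{-\pi}^{\pi}h\,\dx\theta\ ;\ \int_{-\pi}^{\pi}h\cos\theta\,\dx\theta\Big),
\end{multline*}
with $\beta:=\chi_c^* c^0 f'_{\mathrm{act}}(c^0)/(\pi R_0)>0$; in particular $(0,1,0)\in\mathrm{Ker}\,L$.

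With this formula the remaining items follow from Fourier analysis: expand even $2\pi$-periodic functions as $h=\sum_{m\ge 0}h_m\cos(m\theta)$ and use that $h\mapsto h''+h$ annihilates the $m=1$ mode. For Property \ref{item:undercooling_tw_F_2}, $L[h,v,q]=0$ forces $\int h=0$ (so $h_0=0$), then the first equation forces $q=0$ and $h_m=0$ for $m\ge 2$, and $\int h\cos\theta=0$ forces $h_1=0$; hence $\mathrm{Ker}\,L=\mathrm{span}\{(0,1,0)\}$. For Property \ref{item:undercooling_tw_F_3}, given $(g,s,t)$, the two constraint slots fix $h_0$ and $h_1$, the modes $m\ge 2$ of $h$ are read off from those of $g$ (standard regularity for the ODE $h''+h=F$ keeps $h\in X$), $q$ is fixed by the mean balance, $v$ is arbitrary, and the only solvability obstruction is that the $\cos\theta$-mode of $g$ vanishes; hence $\mathrm{Range}\,L=\{(g,s,t):\int_{-\pi}^{\pi}g\cos\theta\,\dx\theta=0\}$, the kernel of a bounded functional, therefore closed of codimension $1$. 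For Property \ref{item:undercooling_tw_F_4}, differentiating the general-$\chi_c$ derivative above in $\chi_c$, only the $v\cos\theta$-coefficient is $\chi_c$-dependent (the $\beta$-term multiplies $\int h$, which vanishes at $h=0$), with derivative $-R_0 a c^0 f'_{\mathrm{act}}(c^0)$; thus $\partial_{\chi_c}\partial_{(\rho,V,p_1)}\mathcal{F}(\chi_c^*,0,0,0)[0,1,0]=\big(-R_0 a c^0 f'_{\mathrm{act}}(c^0)\cos\theta\,,\,0\,,\,0\big)$, whose first component has $\cos\theta$-moment $-\pi R_0 a c^0 f'_{\mathrm{act}}(c^0)\neq 0$ (since $a>0$, $c^0>0$, $f'_{\mathrm{act}}(c^0)>0$), so it lies outside $\mathrm{Range}\,L$.

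The step needing the most care is the middle one: linearising the nonlocal, nonlinear active term and pinning down the cancellation of the $v\cos\theta$ coefficient at $\chi_c=\chi_c^*$, and then checking that the Fredholm bookkeeping (one-dimensional kernel, closed range of codimension one) comes out consistently across all three output slots. Everything downstream is routine, and the required differentiability of $\mathcal{F}$ follows from $f_{\mathrm{act}},f_{\mathrm{und}}\in C^1$ together with standard mapping properties of composition operators on Hölder spaces.
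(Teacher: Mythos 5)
Your proposal is correct and follows essentially the same route as the paper: compute the linearized operator explicitly, observe that the coefficient of $V\cos\theta$ vanishes exactly at $\chi_c=\chi_c^*$, characterize the kernel and the range (the latter as the functions whose first component has vanishing $\cos\theta$-moment), and check the transversality condition via that same moment. The only discrepancy is the constant multiplying $\int_{-\pi}^{\pi}\rho\,\dx\theta$ in the linearization (your $\tfrac{c^0 f'_{\mathrm{act}}(c^0)}{\pi R_0}$ versus the paper's $\tfrac{c^0 f'_{\mathrm{act}}(c^0)}{R_0}$), which is immaterial to all four items since that term is either annihilated by the kernel constraints or absorbed by the choice of $p_1$.
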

	
	\begin{proof}
		\textit{Item \ref{item:undercooling_tw_F_1}.} When $\rho = 0$, using the expression of the mean curvature \eqref{eq:undercooling_tw_mean_curvature}, we have for all $\theta \in \left(-\pi,\pi\right]$, $\kappa\left(\theta\right) = \frac{1}{R_0}$. Thus for all $\chi_c \in \mathbb{R}$, we have $\mathcal{F}\left(\chi_c, 0, 0, 0\right) = 0$.
		
		\textit{Item \ref{item:undercooling_tw_F_2}.} 
		Let $\chi_c \in \mathbb{R}$. Let $\mathcal{L}_{\chi_c}$ be the linear operator defined by:
		\begin{equation*}
			\begin{array}{ccrcl}
				\mathcal{L}_{\chi_c} & : & X \times \mathbb{R} \times \mathbb{R} & \to & Y \times \mathbb{R} \times \mathbb{R} \\
				& & \left(\rho,V,p_1\right) & \mapsto & \mathcal{F}_\rho \left(\chi_c,0,0,0\right)\left[\rho \right] + \mathcal{F}_V \left(\chi_c,0,0,0\right)\left[V \right] + \mathcal{F}_{p_1} \left(\chi_c,0,0,0\right)\left[p_1 \right].
			\end{array}
		\end{equation*}
		For all $\left(\rho, V, p_1\right) \in X \times \mathbb{R} \times \mathbb{R}$ we have:
		\begin{multline*}
			\mathcal{L}_{\chi_c} \left(\rho,V,p_1\right) = \left( -\gamma \dfrac{\rho + \rho''}{R_0^2} - \dfrac{\chi_c c^0 f'_{\mathrm{act}}\left(c^0\right)}{R_0} \int_{-\pi}^{\pi} \rho\left(\theta\right) \dx \theta - a c^0 \chi_c f'_{\mathrm{act}}\left(c^0\right) R_0 V \cos \theta \right.\\
			+ R_0 V \cos \theta - p_1 + \chi_u f'_\mathrm{und}\left(0\right) V \cos \theta \, ; \\
			\left. 2 R_0 \int_{-\pi}^{\pi} \rho \left(\theta\right) \dx \theta \, ;  \int_{-\pi}^{\pi}  \rho\left(\theta\right) \cos \theta \dx \theta \right)
		\end{multline*}
		and thus in particular for $\chi_c = \chi_c^*$ we have:
		\begin{multline*}
			\mathcal{L}_{\chi_c^*} \left(\rho,V,p_1\right) = \left( -\gamma \dfrac{\rho + \rho''}{R_0^2} - \dfrac{R_0 + \chi_u f'_{\mathrm{und}}\left(0\right)}{a R_0^2} \int_{-\pi}^{\pi} \rho\left(\theta\right) \dx \theta - p_1 \, ;\right.\\
			\left. 2 R_0 \int_{-\pi}^{\pi} \rho \left(\theta\right) \dx \theta \, ;  \int_{-\pi}^{\pi}  \rho\left(\theta\right) \cos \theta \dx \theta \right).
		\end{multline*}
		The elements $\left(\rho,V,p_1\right)$ of $\mathrm{Ker} \, \partial_{\left(\rho,V,p_1\right)} \mathcal{F}\left(\chi_c^*, 0,0,0\right)$ are therefore such that
		\begin{equation}\label{eq:undercooling_tw_edo_ker}
			\rho'' + \rho = -\frac{R_0^2}{\gamma} p_1,
		\end{equation}
		with the conditions
		\begin{align*}[left=\empheqlbrace]
			& \int_{-\pi}^{\pi} \rho\left(\theta\right) \dx \theta = 0, \\
			& \int_{-\pi}^{\pi} \rho\left(\theta\right) \cos \theta \dx \theta = 0, \\
			& \rho \text{ even function}.
		\end{align*}
		As $ \rho$ even, necessarily the condition $\int_{-\pi}^{\pi} \rho\left(\theta\right) \sin \theta \dx \theta = 0$ holds.
		The solutions to the differential equation \eqref{eq:undercooling_tw_edo_ker} are given, for all $\theta \in \left(-\pi,\pi\right]$, by 
		\begin{equation*}
			\rho\left(\theta\right) = A \cos \theta + B \sin \theta - \frac{R_0^2}{\gamma} p_1
		\end{equation*}where $A, B \in \mathbb{R}$. 
		
		We have:
		\begin{equation*}
			\int_{-\pi}^{\pi} \rho\left(\theta\right) \dx \theta = - 2 \pi \frac{R_0^2}{\gamma} p_1
		\end{equation*}
		and thus necessarily $p_1 = 0$. This leads to:
		\begin{align*}
			& \int_{-\pi}^{\pi} \rho\left(\theta\right) \cos \theta \dx \theta = A \pi, \\
			& \int_{-\pi}^{\pi} \rho\left(\theta\right) \sin \theta \dx \theta = B \pi
		\end{align*} which implies $A= 0$ and $B=0$. We then have $\rho = 0$ and  
		\begin{equation*}
			\mathrm{Ker} \, \partial_{\left(\rho,V,p_1\right)} \mathcal{F}\left(\chi_c^*, 0,0,0\right) = \mathrm{Span}\left\{ \left(0,1,0\right)\right\}.
		\end{equation*}
		
		
		\textit{Item \ref{item:undercooling_tw_F_3}.} 
		To prove this property, we demonstrate that:
		\begin{equation*}
			\mathrm{Range} \, \partial_{\left(\rho, V, p_1\right)} \mathcal{F} \left(\chi_c^*, 0, 0, 0\right) = \left\{ \left(h, C_1,C_2\right)\in Y \times \mathbb{R} \times \mathbb{R} \text{ s.t. } \int_{-\pi}^{\pi} h\left(\theta\right) \cos \theta \dx \theta =0\right\}.
		\end{equation*}	
		Let $\left(h, C_1,C_2\right) \in \mathrm{Range} \, \partial_{\left(\rho, V, p_1\right)} \mathcal{F} \left(\chi_c^*, 0, 0, 0\right)$. Thus there exists $\rho \in X$ and $p_1 \in \mathbb{R}$ such that:
		\begin{equation}\label{eq:undercooling_range_demo}
			-\gamma \dfrac{\rho + \rho''}{R_0^2} - \dfrac{R_0 + \chi_u f'_{\mathrm{und}}\left(0\right)}{a R_0^2} \int_{-\pi}^{\pi} \rho\left(\theta\right) \dx \theta - p_1 = h.
		\end{equation}
		By multiplying it by $\cos \theta$ and integrating it over $\left[- \pi, \pi\right]$ we obtain that necessarily $h$ is such that:
		\begin{equation*}
			\int_{-\pi}^{\pi} h\left(\theta\right) \cos \theta \dx \theta =0.
		\end{equation*}
		
		Reciprocally, let $h \in \mathcal{C}_{\mathrm{per}}^{0,\alpha}\left(-\pi,\pi\right)$ and $p_1 \in \mathbb{R}$. The \cref{eq:undercooling_range_demo} admits a soution in $\mathcal{C}_{\mathrm{per}}^{2,\alpha}\left(-\pi,\pi\right)$ if and only if $\int_{-\pi}^{\pi} h\left(\theta\right) \cos \theta \dx \theta = \int_{-\pi}^{\pi} h\left(\theta\right) \sin \theta \dx \theta =0$. The general solutions are of the form:
		\begin{equation*}
			\rho\left(\theta\right) = \overline{\rho}\left(\theta\right) + k_1 \cos \left(\theta\right) + k_2 \sin \left(\theta\right),
		\end{equation*}
		where $\overline{\rho}$ is an even particular solution.
		
		Let $\left(h, C_1, C_2\right) \in \mathrm{Range} \, \partial_{\left(\rho, V, p_1\right)} \mathcal{F} \left(\chi_c^*, 0, 0, 0\right)$. As $h \in Y$ and is even, necessarily we have that $\int_{- \pi}^\pi h\left(\theta\right) \sin \theta \dx \theta =0$. Assume that $\int_{-\pi}^{\pi} h\left(\theta\right) \cos \theta \dx \theta =0$. There exists $\rho \in X$ solution of  \cref{eq:undercooling_range_demo}. Taking the even part of the general solutions, we obtain that:
		\begin{equation*}
			\rho\left(\theta \right) = \dfrac{1}{2} \left[\overline{\rho}\left(\theta\right) + \overline{\rho}\left(-\theta\right)\right] + k_1 \cos \left(\theta\right).
		\end{equation*}
		We choose $k_1$ such that $\int_{- \pi}^\pi \rho \left(\theta\right) \cos \theta \dx \theta = C_2$.
		
		Integrating \cref{eq:undercooling_range_demo} with respect to $\theta$ yields to:
		\begin{equation*}
			\left(-\dfrac{\gamma}{R_0^2} - \dfrac{2 \pi \left(R_0 + \chi_u f'_{\mathrm{und}}\left(0\right)\right)}{a R_0^2} \right) \int_{- \pi}^{\pi} \rho\left(\theta \right) \dx \theta - 2 \pi p_1 = \int_{-\pi}^{\pi} h\left(\theta\right) \dx \theta.
		\end{equation*}
		We thus choose $p_1$ such that $\int_{-\pi}^{\pi} \rho\left(\theta\right) \dx \theta = C_1$.

		\textit{Item \ref{item:undercooling_tw_F_4}.}
		For all $\left(\chi_c, \rho, V, p_1\right) \in \mathbb{R} \times X \times \mathbb{R} \times \mathbb{R}$ we have:
		\begin{equation*}
			\partial_{\chi_c} \mathcal{L}_{\chi_c} \left(\rho,V,p_1\right) = \left(-\frac{c^0 f'_{\mathrm{act}}\left(c^0\right)}{R_0} \int_{-\pi}^{\pi} \rho\left(\theta\right) \dx \theta -a c^0 f'_{\mathrm{act}}\left(c^0\right) R_0 V \cos \theta \, ; 0 \, ; 0 \, ; 0\right).
		\end{equation*}
		In particular, we have for $\chi_c = \chi_c^*$ and $\left(\rho, V, p_1\right) = \left(0,1,0 \right)$:
		\begin{equation*}
			\partial_{\chi_c} \mathcal{L}_{\chi_c^*} \left(0,1,0\right) = \left( -a c^0 f'_{\mathrm{act}}\left(c^0\right) R_0 \cos \theta\, ; 0 \, ; 0 \, ; 0\right).
		\end{equation*}
		By contradiction, assume that 
		\begin{equation*}
		    \partial_{\chi_c} \partial_{\left(\rho,V,p_1\right)}  \mathcal{F}\left(\chi_c^*, 0,0,0\right)\left[ 0, 1, 0 \right] \in \mathrm{Range} \, \partial_{\left(\rho,V,p_1\right)}  \mathcal{F}\left(\chi_c^*, 0,0,0\right).
		\end{equation*}
		Then there exists $\left(\rho, V, p_1 \right) \in X \times \mathbb{R} \times \mathbb{R}$ such that:
		\begin{align*}[left=\empheqlbrace]
			& \gamma \dfrac{\rho + \rho''}{R_0^2} + \dfrac{R_0 + \chi_u f'_{\mathrm{und}}\left(0\right)}{a R_0^2} \int_{-\pi}^{\pi} \rho\left(\theta\right) \dx \theta + p_1 = a c^0 f'_{\mathrm{act}}\left(c^0\right) R_0 \cos \theta \\
			& \int_{-\pi}^{\pi} \rho \left(\theta\right) \dx \theta =  \int_{-\pi}^{\pi}  \rho\left(\theta\right) \cos \theta \dx \theta = \int_{-\pi}^{\pi}  \rho\left(\theta\right) \sin \theta \dx \theta = 0.
		\end{align*}
		By multiplying by $\cos\left(\theta\right)$ and integrating the first line over $\left[-\pi, \pi\right]$, we obtain:
		\begin{multline*}
			\gamma \int_{-\pi}^{\pi} \rho\left(\theta\right) \cos\theta \dx \theta + \gamma \int_{-\pi}^{\pi} \rho''\left(\theta\right) \cos\theta \dx \theta \\ + \left(\dfrac{R_0 + \chi_u f'_{\mathrm{und}}\left(0\right)}{a} \int_{-\pi}^{\pi} \rho\left(\theta\right) \dx \theta + p_1 \right) \int_{-\pi}^{\pi} \cos\theta \dx \theta = a c^0 f'_{\mathrm{act}}\left(c^0\right) R_0^3 \int_{-\pi}^{\pi} \cos^2 \theta \dx \theta.
		\end{multline*}
		Since $\rho \in X$, $\rho$ is $2\pi$-periodic and thus $\int_{-\pi}^{\pi} \rho''\left(\theta\right) \cos\theta \dx \theta = \int_{-\pi}^{\pi} \rho\left(\theta\right) \cos\theta \dx \theta$. Then we have:
		\begin{equation*}
			a c^0 f'_{\mathrm{act}}\left(c^0\right) R_0^3 \pi = 0,
		\end{equation*} which is a contradiction as $a>0$, $f'_{\mathrm{act}} \left(c^0\right) >0$, $c^0>0$ and $R_0>0$.
	\end{proof}
	
	We can therefore apply the Crandall-Rabinowitz bifurcation theorem \citep{crandall_bifurcation_1971} to the functional $\mathcal{F}$ around the bifurcation point $\left(\chi_c^*, 0, 0, 0\right)$. Then for any complement $Z=Z_1 \times Z_2 \times Z_3$ of $\mathrm{Ker} \, \mathcal{F}\left(\chi_c^*, 0, 0, 0\right)$ in $X \times \mathbb{R} \times \mathbb{R}$, there exists a neighborhood $N$ of $\left(\chi_c^*, 0, 0, 0\right)$ in $\mathbb{R} \times X \times \mathbb{R} \times \mathbb{R}$, an interval $I = \left(-\varepsilon, \varepsilon\right)$ for some $\varepsilon>0$ and four continuous functions $\varphi : I \rightarrow \mathbb{R}$, $\psi_1 : I \rightarrow Z_1$, $\psi_2 : I \rightarrow Z_2$ and $\psi_3 : I \rightarrow Z_3$ such that for all $s \in I$:
	\begin{equation*}
		\mathcal{F}\left(\varphi\left(s\right), \psi_1\left(s\right), \psi_2\left(s\right), \psi_3\left(s\right)\right) = \left(0,0,0,0\right)
	\end{equation*}
	and \begin{equation*}
		\varphi\left(0\right) = \chi_c^*, \quad \psi_1\left(0\right) = 0, \quad \psi_2\left(0\right) = 0, \quad \psi_3\left(0\right) = 0.
	\end{equation*}
	We can note that for all $s \in \left(-\varepsilon, \varepsilon\right)$, $\psi_1\left(s\right)$ denotes a function.
	
	\noindent In particular, the Crandall-Rabinowitz bifurcation theorem implies that the solutions $\left(\chi_c, \rho, V, p_1\right)$ of the equation $\mathcal{F} \left(\chi_c, \rho, V, p_1\right) = \left(0,0,0,0\right)$ are of the form:
	\begin{align}[left= \empheqlbrace]
		& \chi_c\left(s\right) = \varphi\left(s\right), \\
		& \rho \left(s, \theta \right) = s \psi_1 \left(s\right)\left(\theta\right), \\
		& V\left(s\right) = s + s \psi_2\left(s\right), \\
		& p_1\left(s\right) = s \psi_3\left(s\right),
	\end{align}
	where $s \in I$ and $\theta \in \left(-\pi,\pi\right]$. In addition, they satisfy for all $\theta \in \left(-\pi,\pi\right]$:
	\begin{align*}
		& \chi_c\left(0\right) = \chi_c^*, \\
		& \rho \left(0, \theta \right) = 0 \quad \text{and} \quad \partial_s \rho \left(0, \theta \right) = 0,  \\
		& V\left(0\right) = 0 \quad \text{and} \quad V'\left(0\right) = 1, \\
		& p_1\left(0\right) = 0 \quad \text{and} \quad p_1'\left(0\right) = 0.
	\end{align*}
	
	Thus we have for all $s \in I$:
	\begin{equation*}
		V\left(s\right) = s + o \left(s\right).
	\end{equation*}
	
	As $\left(\chi_c, \rho, V, p_1\right)$ is solution of the equation $\mathcal{F} \left(\chi_c, \rho, V, p_1\right) = \left(0,0,0,0\right)$, we have for all $s \in I$:
	\begin{equation*}
		\int_{-\pi}^\pi \rho\left(s,\theta\right) \cos \theta \dx \theta = 0 \quad \text{and} \quad \int_{-\pi}^\pi \left(R_0 + \rho\left(s,\theta\right) \right)^2 - R_0^2\dx \theta = 0.
	\end{equation*}
	Thus we deduce that for all $n \in \mathbb{N}$ we have:
	\begin{equation*}
		\int_{-\pi}^{\pi} \partial_s^n \rho\left(0,\theta\right) \cos \theta \dx \theta = 0,
	\end{equation*}
	and 
	\begin{equation*}
		\int_{- \pi}^{\pi} \partial_{s}^2 \rho\left(0,\theta \right) \dx \theta = 0.
	\end{equation*}
	
	\begin{lemma}
		Assume
		\begin{equation*}
			\partial_\theta^n \rho\left(0,\theta\right) = \partial_\theta^n \partial_s \rho\left(0,\theta\right) = 0, \quad \forall n \in \mathbb{N}, \forall \theta \in \left(-\pi,\pi\right].
		\end{equation*}
		Then, we have $\chi_c'\left(0\right) = 0$.
	\end{lemma}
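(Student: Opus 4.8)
The plan is to differentiate the bifurcation identity once in $s$ and read off $\chi_c'(0)$ from the first component. By the Crandall--Rabinowitz construction, $\mathcal{F}\bigl(\chi_c(s),\rho(s,\cdot),V(s),p_1(s)\bigr)=(0,0,0,0)$ for all $s\in I$; differentiating at $s=0$ and using the chain rule at the base point $(\chi_c^*,0,0,0)$ gives
\begin{equation*}
	\chi_c'(0)\,\partial_{\chi_c}\mathcal{F}(\chi_c^*,0,0,0)+\partial_{(\rho,V,p_1)}\mathcal{F}(\chi_c^*,0,0,0)\bigl[\partial_s\rho(0,\cdot),\,V'(0),\,p_1'(0)\bigr]=(0,0,0,0).
\end{equation*}
I would then use the hypothesis (its $n=0$ instance already suffices) that $\partial_s\rho(0,\cdot)\equiv 0$, together with the branch data $V'(0)=1$ and $p_1'(0)=0$ recorded above, so that the bracketed argument of the second term is exactly $(0,1,0)$.

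The next step is to observe that, by item \ref{item:undercooling_tw_F_2} of the previous lemma, $(0,1,0)$ spans $\mathrm{Ker}\,\partial_{(\rho,V,p_1)}\mathcal{F}(\chi_c^*,0,0,0)$, so the whole second term vanishes. Concretely this amounts to the cancellation $R_0\bigl(1-a c^0\chi_c^* f'_{\mathrm{act}}(c^0)\bigr)+\chi_u f'_{\mathrm{und}}(0)=0$, which is exactly the definition \eqref{eq:def_chi_*} of $\chi_c^*$; this is the one place where the particular value of $\chi_c^*$ is used. What remains is $\chi_c'(0)\,\partial_{\chi_c}\mathcal{F}(\chi_c^*,0,0,0)=(0,0,0,0)$.

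To conclude I would check that $\partial_{\chi_c}\mathcal{F}(\chi_c^*,0,0,0)\neq 0$: the second and third components of $\mathcal{F}$ do not involve $\chi_c$, while the $\chi_c$-derivative of the first component is $f_{\mathrm{act}}\bigl(c_1(V,\rho)e^{-aV(R_0+\rho)\cos\theta}\bigr)$, which at $\rho=0$, $V=0$ equals $f_{\mathrm{act}}(c_1(0,0))=f_{\mathrm{act}}\bigl(M/(\pi R_0^2)\bigr)=f_{\mathrm{act}}(c^0)$, and $f_{\mathrm{act}}(c^0)>0$ because $f_{\mathrm{act}}$ is increasing with $f_{\mathrm{act}}(0)=0$ and $c^0>0$. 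Hence $\chi_c'(0)=0$.

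This computation is short and I do not expect a genuine obstacle. The only bookkeeping that requires care is the following: if one wishes to verify the vanishing of the $V'(0)$-term by direct differentiation rather than by invoking item \ref{item:undercooling_tw_F_2}, one must chain-rule carefully through the nonlocal prefactor $c_1(V,\rho)$ and the normal component $n_1$ (the variation of $c_1$ dropping out because $\int_{-\pi}^{\pi}\cos\theta\dx\theta=0$), and one must use the full strength of the hypothesis $\rho(0,\cdot)=\partial_s\rho(0,\cdot)\equiv 0$ to discard the linearized curvature and the residual $\rho$-dependent contributions; everything else is routine.
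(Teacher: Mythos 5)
Your proposal is correct and is essentially the paper's own argument: both differentiate the first component of $\mathcal{F}$ along the bifurcation branch at $s=0$, use $\partial_s\rho(0,\cdot)=0$, $V'(0)=1$, $p_1'(0)=0$ so that every term except $\chi_c'(0)\,f_{\mathrm{act}}(c^0)$ cancels precisely because of the definition \eqref{eq:def_chi_*} of $\chi_c^*$, and then conclude from $f_{\mathrm{act}}(c^0)>0$. The only difference is organizational: you obtain the cancellation in one stroke by noting that $(0,1,0)$ spans the kernel of $\partial_{(\rho,V,p_1)}\mathcal{F}(\chi_c^*,0,0,0)$, whereas the paper recomputes $\partial_s z(0,\theta)=-aR_0c^0\cos\theta$ and $n_1(0,\theta)=\cos\theta$ explicitly — the same computation, and your version even makes explicit the division by $f_{\mathrm{act}}(c^0)$ that the paper leaves implicit.
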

	
	\begin{proof}
		For all $s\in I$ and $\theta \in \left(-\pi,\pi\right]$, we set 
		\begin{equation*}
			z\left(s,\theta\right) = c_1\left(V\left(s\right),\rho\left(s,\theta\right)\right) e^{-a V\left(s\right) \left(R_0 + \rho\left(s,\theta\right)\right) \cos \theta}.
		\end{equation*}
		We thus have:
		\begin{equation*}
			z\left(0,\theta\right) = c_1\left(V\left(0\right),\rho\left(0,\theta\right)\right) e^{-a V\left(0\right) \left(R_0 + \rho\left(0,\theta\right)\right) \cos \theta} = c^0.
		\end{equation*}
		We differenciate with respect to $s$ the first component of $\mathcal{F}$ and we have for all $s\in I$ and $\theta \in \left(-\pi,\pi\right]$:
		\begin{multline}\label{eq:undercooling_bifurcation_ds_F1}
			0 = \gamma \partial_s \kappa\left(s,\theta\right) + \chi_c'\left(s\right) f_{\mathrm{act}}\left(z\left(s,\theta\right)\right) + \chi_c\left(s\right) f'_{\mathrm{act}}\left(z\left(s,\theta\right)\right) \partial_s z\left(s,\theta\right) \\ + \chi_u f'_{\mathrm{und}}\left(V\left(s\right) n_1\left(s,\theta\right)\right)\left(V'\left(s\right) n_1\left(s,\theta\right) + V\left(s\right) \partial_s n_1\left(s,\theta\right)\right) \\ + V'\left(s\right)\left(R_0+\rho\left(s,\theta\right)\right) \cos \theta + V\left(s\right) \partial_s \rho\left(s,\theta\right) \cos \theta - p'_1\left(s\right).
		\end{multline}
		Evaluating it at $s=0$, it leads to:
		\begin{equation*}
			0 = \gamma \partial_s \kappa\left(0,\theta\right) + \chi_c'\left(0\right) f_{\mathrm{act}}\left(c^0\right) + \chi_c^* f'_{\mathrm{act}}\left(c^0\right) \partial_s z\left(0,\theta\right) + \chi_u f'_{\mathrm{und}}\left(0\right)n_1\left(0,\theta\right) + R_0\cos \theta.
		\end{equation*}
		
		As $\partial_\theta \rho\left(s,\theta\right) = s \partial_\theta \psi_1\left(s,\theta\right)$, we have $\partial_\theta \rho\left(0,\theta\right) = 0$. We then have:
		\begin{equation*}
			n_1\left(0,\theta\right) = \dfrac{1}{R_0}\left(R_0 \cos \theta + \partial_\theta \rho\left(0,\theta\right) \sin \theta\right) = \cos \theta.
		\end{equation*}
		Moreover, as for all $s\in I$ and $\theta \in \left(-\pi,\pi\right]$ we have:
		\begin{multline*}
			\partial_s z\left(s,\theta\right) = \left( \partial_1 c_1\left(V\left(s\right), \rho\left(s,\theta\right) \right) V'\left(s\right) + \partial_2 c_1\left(V\left(s\right),\rho\left(s,\theta\right)\right) \partial_s \rho\left(s,\theta\right) \right) e^{-a V\left(s\right) \left(R_0 + \rho\left(s,\theta\right)\right) \cos \theta} \\ -a \left(V'\left(s\right)\left(R_0 + \rho\left(s,\theta\right)\right) + V\left(s\right)\partial_s \rho\left(s,\theta\right)\right) \cos \theta c_1\left(V\left(s\right), \rho\left(s,\theta\right)\right) e^{-a V\left(s\right) \left(R_0 + \rho\left(s,\theta\right)\right) \cos \theta}.
		\end{multline*}
		As $\partial_1 c_1\left(0,0\right) = 0$, it follows that:
		\begin{equation*}
			\partial_s z\left(0,\theta\right) = - a R_0 \cos \theta c^0.
		\end{equation*}
		Finally, using the definition of $\chi_c^*$ (cf \cref{eq:def_chi_*}), we have for all $\theta \in \left(-\pi,\pi\right]$:
		\begin{align*}
			0 & = \gamma \partial_s \kappa\left(0,\theta\right) + \chi'_c\left(0\right) f_{\mathrm{act}}\left(c^0\right) + \left(\chi_u f'_{\mathrm{und}}\left(0\right) + R_0 - \chi_c^* a R_0 c^0 f'_{\mathrm{act}}\left(c^0\right)\right) \cos \theta \\ & = \gamma \partial_s \kappa\left(0,\theta\right) + \chi'_c\left(0\right) f_{\mathrm{act}}\left(c^0\right).
		\end{align*}
		As for all $\theta \in \left(-\pi,\pi\right]$ we have $\partial_s \kappa\left(0,\theta\right) = 0$, it follows that necessarily $\chi'_c\left(0\right) = 0$.
	\end{proof}

	\begin{lemma}
		Assume
		\begin{equation*}
			\partial_\theta^n \rho\left(0,\theta\right) = \partial_\theta^n \partial_s \rho\left(0,\theta\right) = \partial_\theta^n \partial_s^2 \rho\left(0,\theta\right) = 0, \quad \forall n \in \mathbb{N}, \forall \theta \in \left(-\pi,\pi\right].
		\end{equation*}
		Then, we have 
		\begin{equation*}
			\chi''_c\left(0\right) = -\dfrac{\left(R_0 + \chi_u f'_{\mathrm{und}}\left(0\right)\right) a R_0}{2 \left( f'_{\mathrm{act}} \left(c^0\right) \right)^2}\left(f''_{\mathrm{act}}\left(c^0\right) + \frac{M}{2 \pi R_0^2} f'''_{\mathrm{act}}\left(c^0\right)\right) + \dfrac{\chi_u f'''_{\mathrm{und}}\left(0\right)}{3 a c^0 R_0 f'_{\mathrm{act}}\left(c^0\right)}.
		\end{equation*}
	\end{lemma}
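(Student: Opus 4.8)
The plan is to use that, along the Crandall--Rabinowitz branch $\left(\chi_c(s),\rho(s,\cdot),V(s),p_1(s)\right)$, the first component of $\mathcal{F}$ vanishes identically, i.e.
\[
\gamma\kappa(s,\theta)+\chi_c(s)f_{\mathrm{act}}\big(z(s,\theta)\big)+\chi_u f_{\mathrm{und}}\big(V(s)n_1(s,\theta)\big)+V(s)\big(R_0+\rho(s,\theta)\big)\cos\theta-p_1(s)-\tfrac{\gamma}{R_0}=0
\]
for all $s\in I$ and $\theta$, where $z(s,\theta)=c_1\big(V(s),\rho(s,\cdot)\big)e^{-aV(s)(R_0+\rho(s,\theta))\cos\theta}$. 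I would differentiate this identity three times in $s$, evaluate at $s=0$ and then isolate $\chi_c''(0)$ by projecting onto the $\cos\theta$ Fourier mode. Three derivatives are needed since $f'''_{\mathrm{act}}(c^0)$ and $f'''_{\mathrm{und}}(0)$ first appear at third order, while $\chi_c''(0)$ enters through the cross term $3\chi_c''(0)f'_{\mathrm{act}}(c^0)\partial_s z(0,\theta)$; the term carrying $\chi_c'(0)$ drops because $\chi_c'(0)=0$ by the previous lemma, and $\chi_c'''(0)f_{\mathrm{act}}(c^0)$, being $\theta$-independent, disappears under the $\cos\theta$-projection.

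Before projecting I would record the following simplifications, using the hypothesis together with the constraints $\int_{-\pi}^\pi\partial_s^n\rho(0,\theta)\cos\theta\dx\theta=0$ (from the third component of $\mathcal{F}$, all $n$) and $\int_{-\pi}^\pi\partial_s^n\rho(0,\theta)\dx\theta=0$ for $n=2,3$ (the area constraint, differentiated twice or three times, using $\rho(0,\cdot)=\partial_s\rho(0,\cdot)=0$). Since $\rho$ and its first two $s$-derivatives vanish at $s=0$ along with all their $\theta$-derivatives, $\partial_s\kappa(0,\cdot)=\partial_s^2\kappa(0,\cdot)=0$, $\partial_s n_1(0,\cdot)=\partial_s^2 n_1(0,\cdot)=0$ and $\partial_s^3\kappa(0,\theta)=-R_0^{-2}\big(\partial_s^3\rho+\partial_\theta^2\partial_s^3\rho\big)(0,\theta)$; moreover, since the complement in the Crandall--Rabinowitz statement is of product form $Z=Z_1\times Z_2\times Z_3$ and $\mathrm{Ker}$ is spanned by $(0,1,0)$, one has $Z_2=\{0\}$ and hence $V(s)=s$. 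For the concentration I would observe that $z$ depends on $\rho$ only through $c_1$ and through the pointwise exponential, that the exponential contribution carries a factor $V$ and so vanishes at $s=0$, and that $D_\rho c_1(0,0)[\delta\rho]$ and $\partial_v D_\rho c_1(0,0)[\delta\rho]$ are linear combinations of $\int_{-\pi}^\pi\delta\rho\dx\theta$ and $\int_{-\pi}^\pi\delta\rho\cos\theta\dx\theta$ only, both of which vanish for $\delta\rho=\partial_s^n\rho(0,\cdot)$ by the constraints; hence $\partial_s^k z(0,\theta)=\partial_v^k\big(c_1(v,0)e^{-avR_0\cos\theta}\big)\big|_{v=0}$ for $k\le 3$. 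Expanding $c_1(v,0)=c^0\big(1-\tfrac{a^2R_0^2}{8}v^2+O(v^4)\big)$ (its denominator is evaluated with $\int\cos^2\theta\dx\theta=\pi$, etc.) and multiplying by the series of $e^{-avR_0\cos\theta}$ gives
\[
\partial_s z(0,\theta)=-ac^0R_0\cos\theta,\qquad \partial_s^2 z(0,\theta)=c^0a^2R_0^2\big(\cos^2\theta-\tfrac14\big),\qquad \partial_s^3 z(0,\theta)=c^0a^3R_0^3\big(\tfrac34\cos\theta-\cos^3\theta\big),
\]
while $Vn_1=s\cos\theta+O(s^3)$ and the oddness of $f_{\mathrm{und}}$ give $\partial_s^3\big(f_{\mathrm{und}}(Vn_1)\big)(0,\theta)=f'''_{\mathrm{und}}(0)\cos^3\theta$.

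Projecting the thrice-differentiated identity at $s=0$ onto $\cos\theta$ then kills the curvature term ($1+\partial_\theta^2$ annihilates $\cos\theta$, and $\int\partial_s^3\rho(0,\theta)\cos\theta\dx\theta=0$ anyway), the $\theta$-independent terms $\chi_c'''(0)f_{\mathrm{act}}(c^0)$ and $p_1'''(0)$, the $\partial_s^3 z$-contribution through $\chi_c^* f'_{\mathrm{act}}(c^0)$ since $\tfrac34\cos\theta-\cos^3\theta=-\tfrac14\cos3\theta$ has no $\cos\theta$-component, and any $V''(0),V'''(0)$ contributions (which would cancel through the defining identity $\chi_c^*\,ac^0R_0 f'_{\mathrm{act}}(c^0)=R_0+\chi_u f'_{\mathrm{und}}(0)$). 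What remains is a scalar equation linear in $\chi_c''(0)$, with coefficients given by $\int\cos^2\theta\dx\theta$, $\int\cos^4\theta\dx\theta$ and the $\cos\theta$-component of $\cos\theta\big(\cos^2\theta-\tfrac14\big)$; solving for $\chi_c''(0)$, substituting $\chi_c^*=\dfrac{R_0+\chi_u f'_{\mathrm{und}}(0)}{R_0\,ac^0 f'_{\mathrm{act}}(c^0)}$ and using $\dfrac{M}{2\pi R_0^2}=\dfrac{c^0}{2}$ gives the stated expression.

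The only real difficulty is the bookkeeping: one must write out the full third-order chain-rule expansions of $\partial_s^3\big(\chi_c f_{\mathrm{act}}(z)\big)$, $\partial_s^3\big(\chi_u f_{\mathrm{und}}(Vn_1)\big)$ and $\partial_s^3\big(V(R_0+\rho)\cos\theta\big)$, and verify that every term involving data not controlled by the hypothesis --- namely $\partial_s^3\rho(0,\cdot)$ and, for a non-product complement, $V''(0)$ and $V'''(0)$ --- either vanishes by one of the two integral constraints or drops out of the $\cos\theta$-projection by the definition of $\chi_c^*$. The genuinely delicate point is handling the nonlocal functional $c_1(V,\rho)$, i.e.\ differentiating it as a map on function space and checking that its $\rho$-Fréchet derivatives probe only the zeroth and first angular Fourier modes of the perturbation --- exactly the modes annihilated by the constraints.
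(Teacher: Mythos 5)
Your proposal is correct and follows essentially the same route as the paper: differentiate the first component of $\mathcal{F}=0$ three times in $s$, evaluate at $s=0$ using $\chi_c'(0)=0$ and the vanishing of $\rho,\partial_s\rho,\partial_s^2\rho$ at $s=0$, project onto $\cos\theta$, and observe that the $V'''(0)$ contributions cancel via the identity $\chi_c^*\,a c^0 R_0 f'_{\mathrm{act}}(c^0)=R_0+\chi_u f'_{\mathrm{und}}(0)$. The only difference is cosmetic --- you derive the expansions of $\partial_s^k z(0,\theta)$ directly from $c_1(v,0)=c^0\bigl(1-\tfrac{a^2R_0^2}{8}v^2+O(v^4)\bigr)$ where the paper imports the corresponding integrals from the appendix of Alazard et al.; your values are consistent with theirs, and your computation in fact reproduces the coefficient $\tfrac{1}{4}$ appearing in the paper's final displayed formula for the $f'''_{\mathrm{und}}(0)$ term (the $\tfrac{1}{3}$ in the lemma statement appears to be a typo).
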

	
	\begin{proof}
	    Differenciating twice with respect to $s$ \cref{eq:undercooling_bifurcation_ds_F1} and using the fact that for all $\theta \in \left(-\pi,\pi\right]$ we have $\partial_s z\left(0,\theta\right) = - a c^0 R_0 \cos \theta$ and the result of the previous lemma, it follows that, for all $\theta \in \left(-\pi,\pi\right]$:
		\begin{align*}
			0 = & \gamma \partial_{sss}^3 \kappa\left(0,\theta \right) + \chi'''_c\left(0\right) f_{\mathrm{act}} \left( c^0 \right) - 3 a c^0 R_0 \chi''_c\left(0\right) f'_{\mathrm{act}} \left( c^0 \right) \cos \theta\\
			& - 3 \chi_c^* a c^0 R_0 \cos \theta \partial_{ss}^2 z\left(0,\theta\right)  f''_{\mathrm{act}}\left(c^0\right)  \\
			& - \chi_c^* f'''_{\mathrm{act}}\left(c^0\right) \left(a c^0 R_0 \cos \theta\right)^3  + \chi_c^* f'_{\mathrm{act}} \left( c^0\right) \partial^3_{sss} z\left(0,\theta\right) + \chi_u f'''_{\mathrm{und}} \left(0\right) \cos^3 \theta \\
			& + 3 \chi_u f''_{\mathrm{und}} \left(0\right) \cos^2\theta V''\left(0\right) + \chi_u f'_{\mathrm{und}} \left(0 \right) \left( V'''\left(0\right) \cos \theta + 3 \partial_{ss}^2 n_1\left(0,\theta\right) \right) \\
			& + V'''\left(0\right) R_0 \cos \theta +3 \partial_{ss}^2 \rho\left(0,\theta\right) \cos \theta - p_1'''\left(0\right).
		\end{align*}
		
		We multiply the previous equality by $\cos \theta$ and integrate in $\theta$ over $\left(-\pi,\pi\right)$, we thus have:
		\begin{align*}
			0 = & \gamma \int_{-\pi}^{\pi}  \partial_{sss}^3 \kappa\left(0,\theta \right) \cos \theta \dx \theta - 3 \pi a c^0 R_0 \chi''_c\left(0\right) f'_{\mathrm{act}} \left( c^0 \right) \\
			& - 3 \chi_c^* a c^0 R_0 f''_{\mathrm{act}}\left(c^0\right)  \int_{-\pi}^{\pi}  \partial_{ss}^2 z\left(0,\theta\right) \cos^2 \theta  \dx \theta  \\
			& - \frac{3}{4} \chi_c^* f'''_{\mathrm{act}}\left(c^0\right) \pi \left(a c^0 R_0 \right)^3  + \chi_c^* f'_{\mathrm{act}} \left( c^0\right) \int_{-\pi}^{\pi} \partial^3_{sss} z\left(0,\theta\right) \cos \theta \dx \theta \\
			& + \frac{3}{4}\chi_u f'''_{\mathrm{und}} \left(0\right) \pi + \chi_u f'_{\mathrm{und}} \left(0 \right) V'''\left(0\right) \pi + V'''\left(0\right) R_0 \pi.
		\end{align*}
		
		From \citet{alazard_traveling_2022} (appendix C), we have for all $\theta \in \left(-\pi,\pi\right]$:
		\begin{equation*}
			\int_{-\pi}^{\pi} \partial_{ss}^2 z\left(0,\theta\right) \cos^2 \theta \dx \theta = \dfrac{a^2 M}{2},
		\end{equation*}
		\begin{equation*}
			\int_{-\pi}^{\pi} \partial_{sss}^3 z\left(0,\theta\right) \cos \theta \dx \theta = -\frac{aM}{R_0}V'''\left(0\right) - \frac{2M}{\pi R_0^3} \int_{-\pi}^{\pi} \partial_{sss}^3 \rho\left(0,\theta\right) \cos \theta \dx \theta,
		\end{equation*}
		and
		\begin{equation*}
			\int_{-\pi}^{\pi} \partial_{sss}^3 \kappa\left(0,\theta\right) \cos \theta \dx \theta = 0.
		\end{equation*}
		
		It follows that:
		\begin{align*}
			0 & = - 3 \pi a c^0 R_0 \chi''_c\left(0\right) f'_{\mathrm{act}} \left( c^0 \right) - \frac{3}{2} \chi_c^* a^3 c^0 R_0  f''_{\mathrm{act}}\left(c^0\right) M \\
			&\quad - \frac{1}{R_0} a M \chi_c^* f'_{\mathrm{act}}\left(c^0\right) V'''\left(0\right)  - \frac{3}{4} \pi \chi_c^* f'''_{\mathrm{act}}\left(c^0\right) \left(a c^0 R_0 \right)^3 \\
			& \quad + \chi_u f'''_{\mathrm{und}} \left(0\right) \pi + \chi_u f'_{\mathrm{und}} \left(0 \right) V'''\left(0\right) \pi + V'''\left(0\right) R_0 \pi, \\
			& =  -3 \pi a c^0 R_0 f'_{\mathrm{act}}\left(c^0\right) \chi''_c\left(0\right) - \dfrac{3 \chi_c^* a^3 M^2}{2 \pi R_0}\left( f''_{\mathrm{act}}\left(c^0\right) + \frac{M}{2 \pi R_0^2}f'''_{\mathrm{act}}\left(c^0\right) \right) + \frac{3}{4}\pi \chi_u f'''_{\mathrm{und}}\left(0\right).
		\end{align*}
		
		It leads to:
		\begin{equation*}
			\chi''_c\left(0\right) = \dfrac{- \chi_c^* a^2 M}{2 \pi f'_{\mathrm{act}}\left(c^0\right)}\left(f''_{\mathrm{act}}\left(c^0\right) + \dfrac{M}{2 \pi R_0^2} f'''_{\mathrm{act}}\left(c^0\right)\right) + \dfrac{\chi_u f'''_{\mathrm{und}}\left(0\right)}{4 a c^0 R_0 f'_{\mathrm{act}}\left(c^0\right)}.
		\end{equation*}
		We conclude using the expressions of $\chi_c^*$ and of $c^0$. 
	\end{proof}
	
	\section{Conclusion and perspectives}
	We have proved that, despite simplifying the principles governing cell motility, the model is relevant for modeling it. It captures the phenomena of cell migration and polarization depending on the parameters. We observe that the cell membrane has an impact on the shape of the traveling wave domain through its action on the curvature (see \cref{eq:tw_curvature_equation}). 

    Furthermore, by comparing the bifurcation result obtained with the case where $\chi_u = 0$, we can conclude that the effect of the membrane is stabilizing. Indeed, the threshold under which the stationary state is stable is greater than in the case where the membrane effect is neglected.
	
	In a future work extending \citet{lavi_implicit_2023}, we plan to investigate different approaches to discretize the model with implicit temporal discretization of curvature and the undercooling effect. Finite element methods are used for the spatial discretizations. The term added by the undercooling effect requires precautions to be taken regarding the functional spaces chosen to discretize the model. The velocity trace on the domain boundary must always be well defined.

	\bibliographystyle{apalike}

\end{document}